\DeclareMathOperator{\Forb}{Hom-Forb}
\newcommand{\cF}{\mathcal{F}}
\newcommand{\cA}{\mathcal{A}}
\newcommand{\bF}{\mathfrak{F}}
\newcommand{\bB}{\mathfrak{B}}
\newcommand{\blue}[1]{{#1}}
\newcommand{\ignore}[1]{}
\DeclareMathOperator{\Csp}{CSP}
\DeclareMathOperator{\Aut}{Aut}
\newcommand{\cG}{\mathcal{G}}
\newtheorem{theorem}{Theorem}
\newtheorem{proposition}{Proposition}
\newtheorem{corollary}{Corollary}
\newtheorem{question}{Question}
\newtheorem{lemma}{Lemma}
\newtheorem{remark}{Remark}
\newtheorem{definition}{Definition}
\newtheorem{example}{Example}
\newtheorem{fact}{Fact}
\title{
Asymptotic Theories of Classes  Defined by \\
Forbidden Homomorphisms}
\author{Manuel Bodirsky, Colin Jahel\footnote{Institut f\"ur Algebra, TU Dresden. Both authors received funding from the DFG (Project FinHom, Grant 467967530) from the ERC (Grant Agreement no. 101071674, POCOCOP). Views and opinions expressed are however
those of the authors only and do not necessarily reflect those of the European Union or the European Research
Council Executive Agency.}
  }
\begin{document}

\maketitle
\thispagestyle{empty}

\begin{abstract}
We study the first-order almost-sure theories for classes of finite structures that are specified by homomorphically forbidding 
a set $\cF$ of finite structures. 
If $\cF$ consists of undirected graphs, a full description of these theories can be derived
from the Kolaitis-Pr\"omel-Rothschild theorem, which treats the special case where $\cF = \{K_n\}$. The corresponding question for finite sets $\cF$ of finite directed graphs is wide open. 
We present a full description of the almost-sure theories of classes described by homomorphically forbidding finite sets $\cF$ of oriented trees; all of them are $\omega$-categorical. In our proof, we establish a result of independent interest, namely that every constraint satisfaction problem for a finite digraph has first-order convergence, and that the corresponding  asymptotic theory can be described as a finite linear combination of $\omega$-categorical theories. 
\end{abstract}

{\bf Keywords.} Almost-sure theory, random digraph, graph homomorphism, oriented tree, constraint satisfaction problem

\ignore{
We study the first-order almost-sure theories for classes of finite structures that are specified by homomorphically forbidding a set F of finite structures. If F consists of undirected graphs, a full description of these theories can be derived from the Kolaitis-Prömel-Rothschild theorem, which treats the special case where F consists of the clique with n vertices only. The corresponding question for finite sets F of finite directed graphs is wide open. We present a full description of the almost-sure theories of classes described by homomorphically forbidding finite sets F of oriented trees; all of them are countably categorical. In our proof, we establish a result of independent interest, namely that every constraint satisfaction problem for a finite digraph has first-order convergence, and that the corresponding  asymptotic theory can be described as a finite linear combination of countably categorical theories. 
}


\section{Introduction}
A famous result of Glebskii, Kogan, Ligons'kii, and  Talanov~\cite{Gleb69} and  
Fagin~\cite{Fagin} states that
the almost-sure theory $T_{\mathcal G}$ of the class $\mathcal G$ of all 
finite undirected graphs is \emph{complete}, i.e., 
every first-order property of graphs holds asymptotically almost surely, or its negation holds asymptotically almost surely (this is sometimes referred to as a \emph{first-order 0-1 law}). Moreover, $T_{\mathcal G}$ 
 is \emph{$\omega$-categorical}, i.e., it has exactly one countable model up to isomorphism, the so-called \emph{Rado graph}~\cite{Rado}, which is of fundamental importance in model theory and can be described as the Fra{\"i}ss\'e-limit of the class of all finite graphs~\cite{HodgesLong}. 
 
If instead of ${\mathcal G}$ we consider the class ${\mathcal G}_3$ of all finite \emph{triangle-free} graphs,
i.e., finite graphs that do not contain a $K_3$ subgraph, we still get a first-order 0-1 law, and the resulting
almost-sure theory again has a single countably infinite model $M_3$. However, somewhat surprisingly, this  model is \emph{not} the Fra{\"i}ss\'e-limit of the class of all finite undirected triangle-free graphs: Erd\H{o}s, Kleitman, and Rothschild~\cite{ErdoesKleitmanRothschild} proved that if $n$ tends to infinity, almost all triangle-free graphs with $n$ vertices are \emph{bipartite}, i.e., have a homomorphism to $K_2$. Clearly, 
there are triangle-free graphs $G$ without a homomorphism to $K_2$ (e.g., the cycle with five vertices, $C_5$). The structure $M_3$ does not contain copies of $G$, and hence $M_3$ is not the Fra{\"i}ss\'e-limit of the class of all finite triangle-free graphs. 

The mentioned result 
about triangle-free graphs has been generalised by Kolaitis, Pr\"omel, and Rothschild~\cite{KolaitisProemel} to the class of finite graphs without a copy of $K_k$, for every $k \geq 3$. 

\begin{theorem}[Kolaitis, Pr\"omel, and Rothschild~\cite{KolaitisProemel}]\label{thm:KPR}
For every $k \geq 3$, a random $K_k$-free graph is asymptotically almost surely $(k-1)$-partite.
\end{theorem}

\blue{In fact, Kolaitis, Pr\"omel, and Rothschild determined the almost-sure theory of the class of $K_k$-free graphs (see Theorem~\ref{thm:KPR-2} below). 
In subsequent work, Pr\"omel and Steger~\cite{ProemelStegerCritical} studied classes} of finite graphs that that are given by forbidding some graph $H$ as a \emph{(weak) subgraph}. 
In this article, we study graph classes that are specified by forbidding a set $\cF$ of finite graphs \emph{homomorphically}, rather than forbidding them as (induced or weak) subgraphs. In particular, we will be interested in the almost-sure theory of the class 
$$\Forb(\cF) := \{ G \in {\mathcal G} \mid \text{no } F \in {\mathcal F} \text{ has a homomorphism to } G \}.$$
The difference between subgraphs and homomorphisms is irrelevant if $\cF = \{K_n\}$, 
because $K_n$ has a homomorphism to a graph $G$ if and only if $K_n$ is isomorphic to an (induced) subgraph of $G$. 
 Quite remarkably, one can obtain a complete description \blue{of the almost-sure theory of $\Forb(\cF)$ 
  for every set $\cF$ of finite graphs (even if $\cF$ is infinite; see Theorem~\ref{thm:graphs}).}  
Let $\chi(G)$ be the \emph{chromatic number} of $G$, i.e., the smallest $n$ such that $G$ has a homomorphism to $K_n$. If $\cF = \emptyset$ then $\Forb(\cF) = \cG$ and the almost-sure theory is the theory of the Rado graph~\cite{Rado}. 
For non-empty $\cF$, define
$k_\cF := \min_{G \in {\cF}} \chi(G)$. 
We show in Section~\ref{sect:undir}  that 
the graphs in 
$\Forb(\cF)$ are asymptotically almost surely 
$(k_{\cF}-1)$-partite, \blue{that} 
$\Forb(\cF)$ satisfies a first-order 0-1 law, and that
 the almost-sure theory of $\Forb(\cF)$ is $\omega$-categorical, i.e., has an up to isomorphism unique countable model.

 The class $\Forb(\cF)$ has also been studied in model theory.
\blue{The \emph{age} of a relational structure $H$ is the class of all finite structures that embed into $H$.}
 Cherlin, Shelah, and Shi~\cite{CherlinShelahShi} showed that there exists an (up to isomorphism unique) 
 countable model-complete structure $M_{\cF}$ whose age equals $\Forb(\cF)$. 
   However, if $k_{\cF} > 2$ then $\Forb(\cF)$ contains graphs that are not $(k_{\cF}-1)$-colorable, and hence this model  
does \emph{not} satisfy the almost-sure theory of $\Forb(\cF)$ (unless $\cF = \emptyset$). 

 If $\cF$ is a set of finite \emph{directed} graphs, then much less is known about the almost-sure theory of $\Forb(\cF)$, even if $\cF$ is finite. Some results have been obtained recently by K\"uhn, Osthus, Townsend, and Zhao~\cite{KOTZ17} for 
 $\Forb(T_3)$ and $\Forb(C_3)$, 
 where $T_3$ is the transitive tournament with three vertices and $\vec C_3$ is the directed cycle with three vertices, answering questions that have been raised in model theory~\cite{Cherlin}. 
 In particular, they proved that asymptotically almost surely the graphs in $\Forb(T_3)$ are bipartite. 
 It is easy to come up with infinite sets ${\mathcal F}$ of finite digraphs such that $\Forb({\mathcal F})$ does not have a 0-1 law (see Example~\ref{expl:no-01}). 

\subsection{Our Results}
\blue{Our proof that every class of undirected graphs defined by forbidden homomorphisms have a 0-1 law and an $\omega$-categorical almost-sure theory can be found in Section~\ref{sect:undir} (Theorem~\ref{thm:graphs}. 
For classes of directed graphs, we present in Section~\ref{sect:undir} 
a full description of the almost-sure theories of classes of digraphs described by forbidding finitely many orientations of finite trees; again,} all of these classes have a 0-1 law and an $\omega$-categorical almost-sure theory (Theorem~\ref{thm:trees}). 
To prove this, we show a result that is of independent interest: 
For every finite digraph $G$ the CSP of $G$ satisfies a first-order convergence law, and the asymptotic theory of the CSP is linearly composed from finitely many $\omega$-categorical theories in a sense that will be made precise. This result and some others generalise from digraphs to general relational structures. For concreteness and clarity of presentation, we focus on the case of graphs and digraphs in this article.

\subsection{Connection with Random Graphs} 
\label{sect:random-graphs}
With the same proof technique as used for oriented graphs to prove our result, we can also prove a result about random (simple undirected) graphs with a homomorphism to a fixed finite  graph, which we believe is of independent interest. 

\begin{theorem}\label{thm:graphs} 
    Let $H$ be a finite simple undirected graph. 
    Then asymptotically almost surely, a graph with vertices $\{1,\dots,n\}$ and a homomorphism to $H$ is 
        $\ell$-partite, where $\ell$ is the size of the largest clique contained in $H$, and each color class 
        asymptotically almost surely has at least $\lfloor n/\ell \rfloor$ many elements.
\end{theorem}

A weaker result (with $\lfloor n/\ell \rfloor$ replaced by $n/2\ell$) for the special case where $H = K_\ell$ has been shown by Turner~\cite{Turner}. \blue{Our strengthening relies on the proof technique that we use for studying the almost sure theory of $\Csp(D)$ for oriented graphs $D$ (see Remark~\ref{rem:concentation}). Our results also imply that
for a graph $G$ with vertices $\{1,\dots,n\}$ and a homomorphism to another fixed finite graph $H$ asymptotically almost surely there exists a unique partition of the vertices into $\ell$ parts such that all edges of $G$ are between distinct parts of the partition; here, $\ell$ is the size of the largest clique in $H$. This generalises a result of Dyer and Frieze~\cite[Theorem 3.1]{DyerFrieze} from $H=K_{\ell}$ to arbitrary finite graphs $H$, and already follows from our results for undirected graphs (see Remark~\ref{rem:dyer-frieze}).}  
 
 \subsection{Connection with Model Theory}
 The $\omega$-categorical theories 
 that arise in our main result, Theorem~\ref{thm:trees}, 
 satisfy a property that is of interest in model theory: they are \emph{pseudo finite}, i.e., every finite subset of the theory has a finite model. An example of a pseudo-finite theory is the theory of the Rado graph, which follows from the fact that its first-order theory equals the almost-sure theory of finite undirected graphs. A class of  theories whose pseudo-finiteness can be shown similarly was isolated by Kruckman~\cite{n-disjoint}. Showing that a theory is pseudo finite can be quite challenging.  
 It is a major open question whether the Fra{\"i}ss\'e-limit of the class of all finite triangle-free graphs has a pseudo-finite theory (see, e.g.,~\cite{EvenZoharLinial}). 
 The mentioned result of Erd\H{o}s, Kleitman, and Rothschild shows that this problem cannot be approached via almost sure theories of finite structures in the same way as for the Rado graph.

\subsection{Connection with Database Theory}
Note that sets ${\mathcal F}$ of finite structures that are homomorphically forbidden  can be interpreted from a database perspective: 
each structure $\bF \in {\mathcal F}$ can be viewed as a so-called \emph{conjunctive query} $\phi$, by turning the elements of the structure into existentially quantified variables and the relations of the structure into a conjunction of atomic formulas. 
Conjunctive queries are the most important class of queries encountered in database theory. 
It is well-known and easy to see that a finite structure $\bB$
(i.e., a \emph{database})
satisfies 
$\phi$ if and only if $\bF$ has a homomorphism into $\bB$; see~\cite{ChandraMerlin}. 
Results about $\Forb({\mathcal F})$ can then be interpreted as 
statements about the uniform distribution on finite structures that
do \emph{not} satisfy 
the given set of conjunctive queries.  An important subclass of conjunctive queries are those that are \emph{Berge acyclic}; 
 this is precisely the notion of acyclicity that we need for our results. 
For other acyclicity notions, see~\cite{DBLP:journals/jacm/Fagin83}.
  
\section{Preliminaries}
A \emph{digraph} $D$ is a pair $(V,E)$ consisting of a set of \emph{vertices} $V(D) = V$ and a set $E(D) = E \subseteq V^2$ of \emph{edges}. 
We write 
\begin{itemize}
\item $T_k$ for the transitive tournament on $k$ vertices, i.e., 
the digraph $(\{1,\dots,k\};<)$ where $<$ denotes the usual strict linear order on $\{1,\dots,k\}$. 
\item $\vec P_k$ for the directed path of length $k$, i.e., the digraph $$(\{1,\dots,k\}; \{(1,2),(2,3),\dots,(k-1,k)\}).$$ 
\item $\vec C_k$ for the directed cycle of length $k$, i.e., the digraph
$$(\{0,1,\dots,k-1\}, \{(u,v) \mid u = v+1 \mod k\}).$$ 
\end{itemize}

A digraph is \emph{symmetric} if $(x,y) \in E$ implies $(y,x) \in E$ for all $x,y \in V$ and \emph{irreflexive} if 
$(x,x) \notin E$ for every $x \in V$. If $G$ is a digraph and $v \in V(G)$, then we write $G-v$ for the digraph $(V(G) \setminus \{v\},E(G) \cap (V(G) \setminus \{v\})^2)$.

An \emph{undirected graph} $G$ is a pair $(V,E)$ where
$V$ is a set of vertices and $E$ is a set of 2-element subsets of $V$. 
There is an obvious bijection between 
undirected graphs with vertices $V$ and irreflexive symmetric digraphs with vertices $V$. We write $K_n$ for the undirected graph with vertex set $V= \{1,\dots,n\}$ whose edge set consists of \emph{all} 2-element subsets of $V$. 
If $G$ is an undirected graph, then an \emph{orientation of $G$} is a digraph $O$ such that $(x,y) \in E(O)$ implies
$\{x,y\} \in E(G)$ 
and for every $\{x,y\} \in E(G)$ either $(x,y) \in E(O)$ or $(y,x) \in E(O)$, but not both. 
An \emph{oriented graph} is an orientation of some undirected graph (i.e., an irreflexive digraph whose edge relation is antisymmetric). 

\subsection{Almost Sure Theories}
Let $\mathcal{C}$ be an isomorphism-closed class of finite (directed or undirected) graphs and let $\mathcal{C}_n$ be the set of graphs from  $\mathcal{C}$ with vertices $\{1,\ldots,n\}$. Graphs with vertex set $\{1,\dots,n\}$ for some $n \in {\mathbb N}$ will also be called \emph{labelled}. 

We consider \emph{first-order logic over graphs}.
An \emph{atomic formula} is an expression of the form $E(x,y)$, of the form $x=y$, or of the form $\bot$ (for \emph{false}) for some variables $x,y$.   
First-order formulas (over graphs) are built from atomic formulas by the usual Boolean connectives and universal and existential quantification. 
Let $\phi$ be a first-order sentence. We
write $G \models \phi$ if $G$ satisfies $\phi$ in the usual sense. Define
$$ \phi_n^{\mathcal C} := \frac{ | \{ G \in {\mathcal C}_n \mid G \models \phi \} | }{|{\mathcal C}_n | }.
$$
If $\lim_{n \to \infty} \phi^{\mathcal C}_n$ exists, we denote the limit by $\phi_\omega^{\mathcal C}$. 
If this limit exists for every first-order sentence $\phi$,
then we say that ${\mathcal C}$ has \emph{first-order convergence}. 

If $\phi_\omega^{\mathcal C} = 1$ we say that \emph{$\phi$ holds in ${\mathcal C}$  asymptotically almost surely}.
More generally, a property $P$ holds \emph{asymptotically almost surely in ${\mathcal C}$} if the fraction of structures in ${\mathcal C}_n$ that has the property $P$ tends to one as $n$ tends to infinity. 

A \emph{theory} is a set of first-order sentences;
a \emph{model} of a theory $T$ is a structure that satisfies all sentences in $T$. 
A theory $T$ is \emph{complete} if it has a model and for every first-order sentence $\phi$ either $\phi \in T$ or $\neg \phi \in T$.
The \emph{almost sure theory of ${\mathcal C}$} is the 
set of first-order sentences $\phi$ such that $\phi^{\mathcal C}_\omega =1$ and
denoted by $T_{\mathcal C}$. 
We say that ${\mathcal C}$ has a \emph{first-order 0-1 law} if 
$T_{\mathcal C}$ is a complete theory.


\subsection{Graph Homomorphisms Background} 
\label{sect:graph-hom}
If $G$ and $H$ are digraphs, then a \emph{homomorphism} from $G$ to $H$ is a map $h \colon V(G) \to V(H)$ such that for all $(x,y) \in E(G)$ we have $(h(x),h(y)) \in E(H)$. 
We write $G \to H$ if there exists a homomorphism from $G$ to $H$, and otherwise we write $G \not \to H$.
Two graphs $G$ and $H$ are called \emph{homomorphically equivalent} if $G \to H$ and $H \to G$. 
We write $\Csp(H)$
   (for \emph{constraint satisfaction problem})  
for the class of all finite digraphs $G$ with a homomorphism to $H$. 
If $\cF$ is a set of graphs, then we write $\Forb(\cF)$ for the class of finite digraphs $G$ such that no digraph in $\cF$ maps homomorphically to $G$. 
It is easy to see that for every digraph $H$ there exists a set of finite digraphs ${\mathcal F}$ such that $\Csp(H) = \Forb({\mathcal F})$. 
\blue{We write
\begin{itemize}
    \item 
$G_1 \times G_2$ for the digraph  with vertices $V(G_1) \times V(G_2)$ and edges $$\{((a_1,b_1),(a_2,b_2)) \mid (a_1,a_2) \in E(G_1), (b_1,b_2) \in E(G_2) \}.$$ 
\item $G^2$ instead of $G \times G$.
\item $G_1 \uplus G_2$ for the \emph{disjoint union} of $G_1$ and $G_2$, i.e., for the digraph with vertices 
$V(G_1) \cup V(G_2$ and edges  $E(G_1) \cup E(G_2)$ where we assume that $V(G_1) \cap V(G_2) = \emptyset$ (otherwise, rename the elements of $V(G_1)$ and $V(G_2)$).
\end{itemize}
}

A homomorphism from $H$ to $H$ is called an \emph{endomorphism of $H$}. 
Note that if $H$ is finite and
if $h$ is a bijective endomorphism of $H$, then the inverse $h^{-1}$ of $h$ is also an endomorphism, and $h$ is called an \emph{automorphism of $H$}. The set of all automorphisms of $H$ forms a group with respect to composition and is denoted by $\Aut(H)$. An \emph{orbit} under the action of $\Aut(H)$ is a set of the form $\{\alpha(a) \mid \alpha \in \Aut(H) \}$ for some $a \in V(H)$. 
A finite digraph $H$ is called a \emph{core} if all endomorphisms of $H$ are automorphisms. 
It is easy to see that every finite digraph $G$ is homomorphically equivalent to a finite core digraph $H$, and that this core digraph is unique up to isomorphism~\cite{HNBook}, and will be called \emph{the} core of $H$. 

All these definitions work analogously also for undirected graphs instead of digraphs. 
Note that for every $k \geq 1$, an undirected graph $G$ is $k$-colourable if and only if 
$G \to K_k$.
The definition for undirected graphs and for the corresponding symmetric and irreflexive digraphs is usually equivalent, via the mentioned bijection. 
However, note that if $H$ is a symmetric and irreflexive digraph, then $\Csp(H)$ might contain digraphs that are not symmetric, whereas if $H$ is an undirected graph, then 
$\Csp(H)$ by definition only contains undirected graphs.

If $\cF$ is a set of undirected graphs then $\Forb(\cF)$ (defined in the introduction) denotes a class of undirected graphs.
A \emph{tree} is a connected undirected graph without cycles.
We later need the following result of Ne\v{s}et\v{r}il and Tardif (\cite{NesetrilTardif}; see Theorem 2.4.4 in~\cite{Foniok-Thesis}).

\begin{theorem}\label{thm:dual} 
For every finite non-empty set of orientations of finite trees $\cF$ there exists a finite digraph
$D = (V,E)$ such that $\Forb({\cF}) = \Csp(D)$. 
\end{theorem}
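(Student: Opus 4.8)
The plan is to construct the dual digraph $D$ explicitly, via an induction that peels the trees in $\cF$ apart. The first move is to reduce to the case $|\cF| = 1$. Write $\cF = \{F_1,\dots,F_k\}$ with each $F_i$ an orientation of a finite tree, and suppose every singleton $\{F_i\}$ has a finite dual $D_i$, i.e.\ $\Csp(D_i) = \Forb(\{F_i\})$. A homomorphism from a digraph $G$ into the categorical product $D_1 \times \dots \times D_k$ is exactly a tuple of homomorphisms $(G \to D_1, \dots, G \to D_k)$, so $G \to D_1 \times \dots \times D_k$ if and only if $G \to D_i$ for all $i$, i.e.\ if and only if $F_i \not\to G$ for all $i$, i.e.\ if and only if $G \in \Forb(\cF)$; and $D_1 \times \dots \times D_k$ is finite. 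Hence it suffices to treat $\cF = \{F\}$ for a single orientation $F$ of a finite tree.

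For a single tree I would induct on $|V(F)|$. If $F$ has at most one edge, then $\Forb(\{F\})$ is either the class consisting of the empty digraph (when $F$ is a single vertex) or the class of edgeless digraphs (when $F$ is a single directed edge, the two orientations being isomorphic), and in both cases a zero- or one-vertex digraph serves as $D$. For the inductive step, suppose $F$ has at least two edges, so it has a leaf $v$ whose unique neighbour $u$ has degree at least two (take $v$ to be an endpoint of a longest path of $F$). Reversing all edges of $F$ reverses all edges of a dual, so up to such a reversal we may assume the edge incident to $v$ is $(u,v)$. Set $F' := F - v$, an orientation of a tree with fewer vertices, and let $D'$ be a finite dual of $\{F'\}$, which exists by the induction hypothesis. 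The key observation is: $F \to G$ if and only if there is a homomorphism $g \colon F' \to G$ with $g(u)$ of out-degree at least one in $G$; equivalently, $F \not\to G$ if and only if every homomorphism $F' \to G$ sends $u$ to a vertex of out-degree $0$.

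What remains, and this is the main obstacle, is to turn this last condition into a homomorphism condition into a \emph{single finite} digraph $D$. Here I would use an explicit gadget construction on top of $D'$: adjoin finitely many new ``sink'' vertices (with no outgoing edges) meant to absorb every vertex of $G$ that is an image of $u$ under some homomorphism $F' \to G$, together with enough new edges that a homomorphism $G \to D'$ extends to a homomorphism $G \to D$ exactly when no such $u$-image has an out-neighbour. The delicate point is that ``being a possible image of $u$'' is a priori not locally testable; this is overcome by first replacing $D'$ by a suitable power of itself, so that each target vertex records which partial homomorphisms of $F'$ are realised at it, and only then attaching the sink part. One then checks both directions of the equivalence $G \to D \iff F \not\to G$, while finiteness of $D$ is clear since each inductive step only takes a finite power and adds finitely many vertices and edges. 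This is precisely the content of the Ne\v{s}et\v{r}il--Tardif construction; an alternative and arguably cleaner route, followed in Foniok's thesis, first records that a directed path $\vec P_{k+1}$ has dual the transitive tournament $T_k$ (the Gallai--Roy theorem) and then reduces a general orientation of a tree to directed paths via the arc-graph functor and its adjoint, recombining the pieces with the product construction of the first paragraph.
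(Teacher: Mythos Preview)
The paper does not prove Theorem~\ref{thm:dual} at all: it is stated as a known result of Ne\v{s}et\v{r}il and Tardif, with a pointer to Foniok's thesis, and is used as a black box thereafter. So there is no ``paper's own proof'' to compare against; your write-up already contains more argument than the paper does.

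As for the content of your sketch: the reduction from a finite set $\cF$ to a single tree via the categorical product is correct and is exactly how the general case is derived in the literature. Your key observation in the inductive step --- that $F \to G$ if and only if some homomorphism $F' \to G$ sends $u$ to a vertex of positive out-degree --- is also correct. The genuine gap is the paragraph that follows: you describe adjoining ``sink'' vertices and passing to a power of $D'$ so that target vertices record realised partial homomorphisms, but you do not actually define $D$ or verify either direction of $G \to D \Leftrightarrow F \not\to G$. This is the entire difficulty of the theorem, and you essentially defer it back to the same references the paper cites. That is fine as a pointer, but it is not a self-contained proof; if you want one, you need to write down the exponential-style construction explicitly and check both inclusions.

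Two minor remarks. First, your base case with $F$ a single vertex is degenerate: under the paper's convention that structures have non-empty domain, $\Forb(\{F\})$ is empty and there is no non-empty $D$ with $\Csp(D)=\emptyset$, so one should read the theorem as implicitly excluding that case (or allow $D=\emptyset$). Second, your alternative route via Gallai--Roy and the arc-graph adjunction is a legitimate and clean way to do the single-tree case, and your identification of $T_k$ as the dual of $\vec P_{k+1}$ is the correct one (indeed $T_3 \in \Forb(\{\vec P_4\})$ but $T_3 \not\to \vec P_3$, so the dual is $T_k$ rather than $\vec P_k$).
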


Clearly, we may additionally require that the finite digraph $D$ from Theorem~\ref{thm:dual} is a core, and with this assumption the structure $D$ is unique up to isomorphism, and called \emph{the dual} of $\cF$.

\begin{example}
For $k \geq 1$, 
if $\cF = \{\vec P_{k+1}\}$, then we have $\Forb(\cF) = \Csp(T_k)$. 
\end{example}

A first-order formula $\phi(x_1,\dots,x_k)$ is called \emph{primitive positive} if it has the form
$$\exists y_1,\dots,y_n (\psi_1 \wedge \cdots \wedge \psi_m)$$
for some atomic formulas $\psi_1,\dots,\psi_m$ over the variables $x_1,\dots,x_k,y_1,\dots,y_n$. 
\emph{Existential positive} formulas are disjunctions of primitive positive formulas. A first-order formula is called \emph{existential} (\emph{universal}) if does not contain universal (existential) quantifiers and all negation symbols appear in front of atomic formulas; note that every existential positive formula is existential. 
\blue{If $\phi(x_1,\dots,x_n)$ is a formula with free variables from $x_1,\dots,x_n$, then the set of tuples $(a_1,\dots,a_n) \in V(D)^n$ such that $D$ satisfies $\phi(a_1,\dots,a_n)$ is called the relation (or, in the case $n=1$, the subset of $D$) \emph{defined} by $\phi$ in $D$.} 
The following is well-known; see, e.g.,~\cite{Book}. 

\begin{lemma}\label{lem:orb-pp}
Let $D$ be a finite core digraph and $u \in V(D)$. Then the orbit of $u$ in $\Aut(D)$ has a primitive positive definition in $D$. 
\end{lemma}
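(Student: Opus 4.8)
The plan is to exhibit the defining formula explicitly via the \emph{canonical conjunctive query} of $D$. Fix an enumeration $V(D) = \{a_1,\dots,a_n\}$ with $a_1 = u$, and form the primitive positive formula
$$\phi(x_1,\dots,x_n) := \bigwedge_{(a_i,a_j) \in E(D)} E(x_i,x_j)$$
(adding the harmless conjunct $x_1 = x_1$ if $E(D) = \emptyset$). Then I would set $\psi(x) := \exists x_2 \cdots \exists x_n\, \phi(x,x_2,\dots,x_n)$, which is primitive positive by construction. The claim is that for $b \in V(D)$ one has $D \models \psi(b)$ if and only if $b$ lies in the orbit of $u$ under $\Aut(D)$; the lemma follows immediately.

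For the easy inclusion: if $\alpha \in \Aut(D)$ satisfies $\alpha(u) = b$, then the tuple $(b, \alpha(a_2),\dots,\alpha(a_n))$ satisfies $\phi$, since $\alpha$ preserves $E$, and hence $D \models \psi(b)$. For the converse, suppose $(b, b_2,\dots,b_n)$ is a witness for $\psi(b)$, and put $b_1 := b$. Since the $a_i$ are pairwise distinct, the assignment $e \colon a_i \mapsto b_i$ is a well-defined map $V(D) \to V(D)$, and each conjunct $E(x_i,x_j)$ of $\phi$ guarantees $(b_i,b_j) \in E(D)$; thus $e$ is an endomorphism of $D$.

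The only place the hypothesis enters — and the crux of the argument — is that $D$ is a finite core: by definition every endomorphism of $D$ is an automorphism, so $e \in \Aut(D)$, and therefore $b = e(a_1) = e(u)$ lies in the orbit of $u$. I do not expect a real obstacle here: the statement is essentially a repackaging of the definition of ``core'', and the only substantive observation is that the satisfying assignments of the canonical query of $D$ inside $D$ are exactly the endomorphisms of $D$. If desired, the same construction could be run with the ``indicator structure'' formalism, but the direct query construction above is the shortest route.
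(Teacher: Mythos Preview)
Your proof is correct and is exactly the standard argument. The paper does not actually give a proof of this lemma: it states the result as ``well-known'' with a citation and moves on. Your approach via the canonical conjunctive query is precisely the construction the paper defines in the paragraph immediately following the lemma, so your proof is fully consistent with the paper's intended line of reasoning.
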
  

The \emph{conjunctive query} of a digraph $D$
is the \blue{(primitive positive and quantifier-free) formula $\phi$ defined as follows: we have a variable $x_a$ for every vertex $a$ of $D$, and for every edge 
$(a,b) \in E(D)$ we add a conjunct $E(x_a,x_b)$ to $\phi$.} 

\subsection{Model Theory Background}
Many results about the first-order logic of graphs hold more generally also for first-order logic over general relational structures; and in fact, this more general setting of relational structures will also be convenient in some of our proofs 
in Section~\ref{sect:D-col}. For an introduction to model theory of general relational structures we refer to~\cite{Hodges}.  

An \emph{embedding of $G$ into $H$} is an isomorphism of $G$ with a substructure of $H$;
a bijective embedding is called an \emph{isomorphism}.
\blue{A \emph{reduct} of a structure $H$ is a structure $G$ obtained from $H$ by dropping some of the relations of $H$; in this case, $H$ is called an \emph{expansion} of $G$.}
A \emph{theory} is simply a set of first-order sentences. A theory is \emph{satisfiable} if it has a model. 
A satisfiable theory $T$ is called \emph{$\omega$-categorical} if all countable models of $T$ are isomorphic. 
A first-order theory $T$ is \emph{model complete} if for every embedding $e$ between models $G$ and $H$ of $T$ preserves all first-order formulas \blue{$\phi$, i.e., if
$G$ satisfies $\phi(a_1,\dots,a_n)$, then 
$H$ satisfies $\phi(e(a_1),\dots,e(a_n))$.} 
A structure is called \emph{model complete} or \emph{$\omega$-categorical} if its first-order theory is. 
Reducts of $\omega$-categorical structures are
$\omega$-categorical as well (see, e.g.,~\cite{Hodges}). 
Note that all finite structures are model-complete
and $\omega$-categorical.  
Further examples of structures that are both model-complete and $\omega$-categorical are provided by structures that are homogeneous in a finite relational signature; a structure $S$ is called \emph{homogeneous} if all isomorphisms between finite substructures can be extended to an automorphism of $S$. 

A class of finite relational structures
${\mathcal C}$ has the \emph{amalgamation property} 
if for all structures $A,B_1,B_2 \in {\mathcal C}$ such that $A$ is a substructure of both $B_1$ and $B_2$ such that $V(A) = V(B_1) \cap V(B_2)$ there exists $C \in \mathcal C$ 
and embeddings 
$f_1 \colon B_1 \to C$ and 
$f_2 \colon B_2 \to C$ such that $f_1(a)=f_2(a)$ for all 
$a \in A$. A class of finite structures 
which is closed under substructures, isomorphism, has countably many isomorphism classes, and has the amalgamation property is called an \emph{amalgamation class}. 

\begin{theorem}[Fra{\"i}ss\'e's theorem; see, e.g.~\cite{HodgesLong}]
\label{thm:fraisse}
Let ${\mathcal C}$ be an amalgamation class. Then there exists an up to isomorphism unique countable homogeneous structure whose age is ${\mathcal C}$, which is called the \emph{Fra\"{i}ss\'e-limit} of ${\mathcal C}$. 
\end{theorem}

Two theories are called \emph{equivalent} if they have the same models. 
Two theories are called \emph{companions} if they have the same universal consequences.  
A companion of a theory $T$ which is model-complete is called a \emph{model companion} of $T$; a model companion might not exist, but if it exists, then it is unique up to equivalence (see, e.g.,~\cite{Hodges}). 

\begin{theorem}[Theorem~4.5.8  in~\cite{Book}]\label{thm:mc-crit}
    A countable $\omega$-categorical 
    structure $H$ is model complete if and only if it has a homogeneous expansion by relations that are both existentially and universally definable in $H$. 
\end{theorem}

An \emph{extension axiom} if a first-order sentence of the form 
$\forall x_1,\dots,x_n \exists y. \psi$ where $\psi$ is quantifier-free. The following well-known fact can be shown by a back-and-forth argument. 

\begin{lemma}\label{lem:ext-ax}
Every first-order theory of a homogeneous structure in a finite relational language is equivalent to a set of extension axioms. 
\end{lemma}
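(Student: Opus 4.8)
The plan is to exhibit an explicit set $\Sigma$ of extension axioms and prove that $\Sigma$ and $T:=\mathrm{Th}(S)$ have the same models, where $S$ denotes the given homogeneous structure over a finite relational language $\tau$. For a finite $\tau$-structure $A$ with domain $\{1,\dots,n\}$ let $\delta_A(x_1,\dots,x_n)$ be its \emph{diagram formula}: the conjunction of all atomic and negated-atomic $\tau$-formulas over $x_1,\dots,x_n$ that hold of $(1,\dots,n)$ in $A$. Then $\delta_A$ is a complete quantifier-free type, and for a tuple $\bar a$ in any $\tau$-structure $M$ we have $M\models\delta_A(\bar a)$ iff the entries of $\bar a$ are pairwise distinct and $i\mapsto a_i$ is an isomorphism from $A$ onto the substructure of $M$ induced on $\{\bar a\}$. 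Let $\Sigma$ consist of: (i) for each finite $\tau$-structure $A$ that does not embed into $S$, the sentence $\forall\bar x\,\neg\delta_A(\bar x)$, written in extension-axiom form as $\forall\bar x\,\exists y\,\neg\delta_A(\bar x)$ with $y$ fresh; (ii) for each finite $A$ embedding into $S$ with domain $\{1,\dots,n\}$ and each $B$ with domain $\{1,\dots,n{+}1\}$ that embeds into $S$ and whose restriction to $\{1,\dots,n\}$ is $A$, the extension axiom $\forall x_1\dots x_n\,\exists y\,\bigl(\neg\delta_A(\bar x)\vee\delta_B(\bar x,y)\bigr)$, equivalently $\forall\bar x\,(\delta_A(\bar x)\to\exists y\,\delta_B(\bar x,y))$; the instance $n=0$ yields, for each quantifier-free $1$-type $q$ realised in $S$, the axiom $\exists y\,q(y)$; and (iii) if $S$ is finite with $|S|=k$, the universal sentence $\forall x_1\dots x_{k+1}\,\bigvee_{i<j}x_i=x_j$, again padded with a dummy existential.

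First I would check $S\models\Sigma$, so that $\Sigma\subseteq T$ and hence $\mathrm{Mod}(T)\subseteq\mathrm{Mod}(\Sigma)$. The axioms in (i) and (iii) hold by the definition of the age and the cardinality of $S$. For an axiom in (ii): if $\bar a$ is a copy of $A$ in $S$, fix some copy $(\bar a',b')$ of $B$ in $S$; then $a_i\mapsto a'_i$ is an isomorphism between finite substructures of $S$, which by homogeneity extends to an automorphism $\alpha$ of $S$, and $\alpha^{-1}(b')$ witnesses the existential.

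For the converse inclusion $\mathrm{Mod}(\Sigma)\subseteq\mathrm{Mod}(T)$, let $M\models\Sigma$. The axioms in (i) say exactly that every finite substructure of $M$ embeds into $S$, i.e.\ $\mathrm{Age}(M)\subseteq\mathrm{Age}(S)$; conversely, given $C\in\mathrm{Age}(S)$ with an enumeration of its domain, applying the axioms in (ii) along the chain of its initial substructures builds a copy of $C$ inside $M$ one vertex at a time, so $\mathrm{Age}(M)=\mathrm{Age}(S)$. If $S$ is finite then $M$ itself is a finite structure embedding into $S$ while (iii) bounds $|M|\le|S|$, so $M\cong S$ and $M\models T$. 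If $S$ is infinite, I would show that every \emph{countable} model of $\Sigma$ is isomorphic to $S$ by a back-and-forth argument: given tuples $\bar a,\bar b$ with $M\models\delta_A(\bar a)\wedge\delta_A(\bar b)$ and a point $c$ not among $\bar a$, the tuple $(\bar a,c)$ induces a substructure $B\in\mathrm{Age}(M)=\mathrm{Age}(S)$, so the corresponding axiom from (ii) yields $d\in M$ with $M\models\delta_B(\bar b,d)$; thus $M$ is ultrahomogeneous with age $\mathrm{Age}(S)$, hence $M\cong S$ by uniqueness of the Fra\"iss\'e limit (see~\cite{Hodges}). Finally, since $\mathrm{Age}(S)$ contains structures of every finite size, finitely many axioms from (ii) force any model of $\Sigma$ to have at least $n$ elements for each $n$; hence $\Sigma$ is satisfiable (by $S$), has no finite models, and is $\omega$-categorical, so it is complete by the \L o\'s--Vaught test. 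As $T$ is complete and $S$ models both $\Sigma$ and $T$, we get $\mathrm{Mod}(\Sigma)=\mathrm{Mod}(T)$, i.e.\ $T$ is equivalent to $\Sigma$.

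The only genuine obstacle is bookkeeping forced by the shape of the axioms: the definition of extension axiom used here permits only a single existential quantifier in front of a quantifier-free matrix, so one cannot directly write ``every copy of $A$ extends to a copy of $B$'' when $|B|>|A|+1$, nor ``there are at least $n$ elements'' as a single sentence, and must instead realise such statements by iterating one-point extension axioms and pad genuinely universal statements with an inert existential. Everything else is a routine application of Fra\"iss\'e's theorem and the \L o\'s--Vaught test.
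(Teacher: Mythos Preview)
Your proposal is correct and is exactly the back-and-forth argument the paper alludes to (the paper gives no proof beyond the phrase ``can be shown by a back-and-forth argument''). You have spelled out the standard axiomatisation by one-point extension axioms and the Fra{\"i}ss\'e/\L o\'s--Vaught conclusion in full detail; the only cosmetic point is that in the finite case you should also note $S\in\mathrm{Age}(S)=\mathrm{Age}(M)$ to get $|M|\ge|S|$ before concluding $M\cong S$.
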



\begin{lemma}\label{lem:mc}
Let $D$ be a finite digraph (or graph) and let $$T := \{ \neg \phi \mid \phi \text{ is primitive positive and } D \models \neg \phi\}.$$
Then $T$ has an $\omega$-categorical model companion. Its countable model $U(D)$
is \emph{universal} in the sense has the property that a countable digraph (or graph) $C$ has an embedding into $U(D)$ if and only if $C$ maps homomorphically to $D$. 
\end{lemma}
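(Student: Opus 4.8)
My plan is to write down explicitly an $\omega$-categorical, model-complete companion of $T$; the universality statement will fall out of the construction. The starting point is a description of the models of $T$: for a digraph $C$ of arbitrary cardinality, $C\models T$ if and only if $C\to D$. Indeed, homomorphisms preserve primitive positive sentences, so $C\to D$ gives $C\models T$; conversely, if $C\models T$ then for every finite substructure $F\subseteq C$ the conjunctive query $\phi_F$ of $F$ satisfies $C\models\phi_F$, hence $D\not\models\neg\phi_F$, and since $\phi_F$ is primitive positive this means $F\to D$. As $D$ is finite, a compactness argument now assembles these local homomorphisms into a single homomorphism $C\to D$. In particular the class of \emph{finite} models of $T$ is exactly $\Csp(D)$.

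Next I construct the candidate structure. Write $V(D)=\{1,\dots,k\}$ and consider the relational language $\{E,P_1,\dots,P_k\}$ with $P_1,\dots,P_k$ unary. Let $\mathcal{K}$ be the class of finite structures in this language in which the $P_i$ partition the domain and the map sending a vertex $v$ to the unique $i$ with $P_i(v)$ is a homomorphism to $D$; thus the $\{E\}$-reducts of members of $\mathcal{K}$ are exactly the members of $\Csp(D)$, each carrying a distinguished homomorphism to $D$. A free amalgam of two members of $\mathcal{K}$ over a common substructure is again in $\mathcal{K}$ (every edge of a free amalgam lies in one of the two factors), so $\mathcal{K}$ is a Fra\"iss\'e class; let $G$ be its Fra\"iss\'e limit and put $U_D:=G\restriction\{E\}$. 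As a reduct of a structure homogeneous in a finite relational language, $U_D$ is $\omega$-categorical, and its age is $\{A\restriction\{E\}:A\in\mathcal{K}\}=\Csp(D)$. Combining this with the compactness argument above, for every countable digraph $C$ we get that $C$ embeds into $U_D$ iff every finite substructure of $C$ lies in $\Csp(D)$ iff $C\to D$ (for the embedding: lift $C$ to a $\mathcal{K}$-structure via a homomorphism $C\to D$ and embed it into $G$ by universality of the Fra\"iss\'e limit), which is the claimed universality. Finally, $T$ and $T^{\ast}:=\operatorname{Th}(U_D)$ are companions: the substructures of models of $T$ are precisely the digraphs mapping to $D$ (as $\Csp(D)$ is closed under substructures), and the same holds for $T^{\ast}$, since $U_D\models T$ forces every model of $T^{\ast}$ to map to $D$, while conversely every digraph mapping to $D$ embeds into a sufficiently saturated elementary extension of $U_D$ by compactness.

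It remains to show that $T^{\ast}$ is model complete; combined with the previous paragraph this exhibits $T^{\ast}$ as a model companion of $T$, which is then \emph{the} model companion, is $\omega$-categorical, and has $U_D$ as its unique countable model, so the lemma follows. This last step is the main obstacle, and it does \emph{not} follow formally from homogeneity of $G$, since $U_D$ is only a reduct of $G$ and the $P_i$ are typically not even definable in $U_D$. I would prove it by Robinson's criterion: given $M\subseteq N$ with $M,N\models T^{\ast}$ and an existential formula $\varphi(\bar a)=\exists\bar y\,\psi(\bar a,\bar y)$ with $\bar a\in M$ and $N\models\varphi(\bar a)$, choose a finite substructure $A'$ of $N$ witnessing $\varphi$ and containing $A:=\langle\bar a\rangle$; since $A'\in\Csp(D)$, it suffices to embed $A'$ into $M$ over $\bar a$. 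By $\omega$-categoricity the existential formula ``$\bar x$ extends to a copy of $A'$'' is constant on the automorphism-orbit of $\bar a$, so it is enough to see that this orbit is ``colour-compatible'' with $A'$. After replacing $D$ by its core (which changes neither $\Csp(D)$ nor $T$) and using Lemma~\ref{lem:orb-pp}, the $\Aut(D)$-orbits are primitive positive definable in $D$; the corresponding primitive positive formulas then hold of a tuple of $U_D$ exactly according to the $\Aut(D)$-orbits of the colours of its entries -- one implication because homomorphisms preserve primitive positive formulas and $U_D\to D$, the other by genericity of $G$ -- and, pushed from single entries to suitable finite gadgets, this lets one encode the full ``colour type'' of a tuple by existentially quantified witnesses, so that any two tuples of $U_D$ with the same existential type lie in the same $\Aut(U_D)$-orbit, i.e.\ every first-order formula is $T^{\ast}$-equivalent to an existential one. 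Turning ``colour information'' into existential $\{E\}$-formulas via generic witnesses is where the real work lies. Alternatively, one may invoke the Cherlin--Shelah--Shi construction applied to the set $\cF$ of finite connected digraphs that do not map to $D$, so that $\Forb(\cF)=\Csp(D)$: this directly supplies a model-complete structure with age $\Csp(D)$ and the stated universality, and one then identifies its theory with $\operatorname{Th}(U_D)$, with $\omega$-categoricity coming from the reduct presentation above.
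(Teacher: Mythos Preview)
Your approach differs substantially from the paper's. There the entire proof is two sentences: existence of an $\omega$-categorical model companion is obtained by citing Saracino's theorem, and universality by a standard compactness argument with a pointer to~\cite{Book}. No explicit construction of $U_D$ is given at that point. Your route---building $U_D$ as the $\{E\}$-reduct of the Fra{\"i}ss\'e limit of $D$-coloured digraphs---is in fact what the paper carries out \emph{later} as Lemmas~\ref{lem:AP} and~\ref{lem:graph-reduct}, but there only under the additional hypothesis that $\Aut(D)$ is trivial; that hypothesis is exactly what makes each $P_v$ and its complement existentially positive definable in the reduct, and hence gives model completeness via the criterion cited from~\cite{Book}.

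Without that hypothesis your model-completeness argument has a real gap, which you yourself flag. Lemma~\ref{lem:orb-pp} only gives pp-definability of orbits of single elements, and the phrase ``pushed from single entries to suitable finite gadgets'' is not a proof. What is actually required is (i) that for a finite core $D$ every $\Aut(D)$-orbit on $D^n$, not just on $D$, is pp-definable (take the conjunctive query of $D$ with the coordinates of the chosen tuple as free variables and all remaining vertices existentially quantified; any witnessing assignment is an endomorphism of $D$, hence an automorphism), and (ii) that two $n$-tuples in the reduct lie in the same $\Aut(U_D)$-orbit iff they have the same quantifier-free $\{E\}$-type and their colour tuples lie in the same $\Aut(D)$-orbit (each $\sigma\in\Aut(D)$ induces a relabelling of $C_D$ that is again a Fra{\"i}ss\'e limit of the same class, giving an automorphism of the reduct that shifts colours by $\sigma$; then apply homogeneity of $C_D$). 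With (i) and (ii) every orbit and its complement are existentially definable, and model completeness follows by Ryll--Nardzewski. Your Cherlin--Shelah--Shi alternative does not sidestep this: for the infinite set $\cF$ of connected digraphs not mapping to $D$, $\omega$-categoricity of the CSS structure is not part of that theorem, and identifying the CSS structure with your Fra{\"i}ss\'e reduct via uniqueness of the model companion already presupposes the model completeness of the latter.
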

\begin{proof}
The first part of the statement follows from Saracino's theorem~\cite{Saracino}. 
The second statement follows from a standard compactness argument; see, e.g.,~\cite{Book}. 
\end{proof}

For the special case that $D = K_k$, for $k \geq 2$,   
Kolaitis, Pr\"omel, and Rothschild~\cite{KolaitisProemel} also proved the following.

\begin{theorem}[\cite{KolaitisProemel}]
\label{thm:KPR-2}
    For $k \geq 2$, the almost-sure theory of $\Csp(K_k)$ equals the first-order theory of $U(K_k)$. 
\end{theorem}

The graph $U(K_k)$ will be called the \emph{generic $k$-partite graph}.
Note that for $k \geq 2$ this graph is not homogeneous. 


\section{Forbidden Undirected Graphs}
\label{sect:undir}
The results of Kolaitis, Pr\"omel, and Rothschild (Theorem~\ref{thm:KPR} and Theorem~\ref{thm:KPR-2}) 
have a remarkably general consequence which provides a complete description 
of the almost-sure theory of $\Forb(\cF)$ for all sets of finite undirected graphs $\cF$. 


\begin{theorem}\label{thm:graphs}
Let $\cF$ be a non-empty set of finite undirected graphs. 
Then the almost-sure theory of $\Forb(\cF)$ 
equals the first-order theory of the generic $(k_{\cF}-1)$-partite graph. 
\end{theorem}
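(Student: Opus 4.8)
The plan is to reduce everything to the Kolaitis--Pr\"omel--Rothschild theorem via a squeezing argument between two classes whose almost-sure theories we already understand. Write $k := k_{\cF}$ and let $\mathcal P_{k-1}$ denote the class of finite $(k-1)$-partite graphs, i.e.\ $\mathcal P_{k-1} = \Csp(K_{k-1})$. The first observation is that $\mathcal P_{k-1} \subseteq \Forb(\cF)$: if $G$ is $(k-1)$-partite then $\chi(G) \le k-1 < \chi(F)$ for every $F \in \cF$, so no $F$ can map homomorphically to $G$ (a homomorphism $F \to G$ composed with a $(k-1)$-coloring of $G$ would give $\chi(F) \le k-1$). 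The second observation is an inclusion in the other direction: pick $F_0 \in \cF$ with $\chi(F_0) = k$; then every graph in $\Forb(\cF)$ is in $\Forb(\{F_0\})$, and since $F_0$ has a homomorphism to $K_k$, every $K_k$-free graph fails to receive $F_0$ homomorphically only if\ldots{} more carefully: $\Forb(\{F_0\}) \supseteq \Forb(\{K_k\})$ is false in general, so instead I use that $F_0 \to K_k$ implies every $G$ admitting $K_k$ as a subgraph also admits $F_0$ homomorphically, hence $\Forb(\{F_0\}) \subseteq \{G : K_k \not\subseteq G\}$, which is exactly the class of $K_k$-free graphs. So we obtain the sandwich
\[
\mathcal P_{k-1} \ \subseteq\ \Forb(\cF)\ \subseteq\ \{G \text{ finite} : G \text{ is } K_k\text{-free}\}.
\]

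Next I invoke Kolaitis--Pr\"omel--Rothschild: asymptotically almost all $K_k$-free graphs on $n$ vertices are $(k-1)$-partite. Equivalently, writing $a_n$ for the number of labelled $(k-1)$-partite graphs on $n$ vertices and $b_n$ for the number of labelled $K_k$-free graphs on $n$ vertices, we have $a_n / b_n \to 1$. Let $c_n := |\Forb(\cF)_n|$; the sandwich gives $a_n \le c_n \le b_n$, hence $a_n/c_n \to 1$ and $c_n/b_n \to 1$. The point of this is that \emph{counting measure} on $\Forb(\cF)_n$ is asymptotically indistinguishable from counting measure on either of the two bracketing classes: for any first-order sentence $\phi$,
\[
\big|\phi_n^{\Forb(\cF)} - \phi_n^{\mathcal P_{k-1}}\big| \ \le\ \frac{|\Forb(\cF)_n \setminus (\mathcal P_{k-1})_n|}{c_n} + \Big(\tfrac{a_n}{c_n} - 1\Big) \ \le\ \frac{c_n - a_n}{c_n} + \Big(1 - \tfrac{a_n}{c_n}\Big) \ \longrightarrow\ 0,
\]
where the first term bounds the measure of graphs in the symmetric difference and the second corrects for the differing normalizations; both go to $0$ by $a_n/c_n \to 1$. (The same estimate with $b_n$ in place of $a_n$ works too.) Therefore $\Forb(\cF)$ has first-order convergence if and only if $\mathcal P_{k-1}$ does, and in that case $\phi_\omega^{\Forb(\cF)} = \phi_\omega^{\mathcal P_{k-1}}$ for every $\phi$, so the two almost-sure theories coincide.

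Finally, it remains to identify the almost-sure theory of $\mathcal P_{k-1} = \Csp(K_{k-1})$ with the first-order theory of the generic $(k-1)$-partite graph $U_{K_{k-1}}$. This is exactly the content of the result attributed to Kolaitis, Pr\"omel, and Rothschild that is already recorded in the excerpt right after Lemma~\ref{lem:mc} (``the first-order theory of $U_{K_n}$ equals precisely the almost-sure theory of $\Csp(K_n)$''). Combining this with the previous paragraph yields that the almost-sure theory of $\Forb(\cF)$ equals the first-order theory of the generic $(k_\cF - 1)$-partite graph, as claimed. One edge case deserves a line of care: when $k_\cF = 2$, the class $\mathcal P_1 = \Csp(K_1)$ is the class of edgeless graphs, $U_{K_1}$ is the countable edgeless graph, and the sandwich degenerates with the upper class being $K_2$-free graphs $=$ edgeless graphs, so the statement holds trivially; and when $k_\cF = 1$, some $F \in \cF$ has a loopless $1$-coloring only if $F$ has no edges, in which case $\Forb(\cF)$ is empty or trivial --- but since $\cF$ consists of graphs (which are irreflexive), $\chi \ge 1$ always, and $\chi(F) = 1$ iff $F$ is edgeless, so $k_\cF \ge 2$ whenever $\cF$ contains a graph with an edge; the genuinely degenerate subcase is harmless and can be dispatched in a sentence.

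\medskip

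\noindent\textbf{Main obstacle.} The only nontrivial input is the Kolaitis--Pr\"omel--Rothschild enumeration result $a_n/b_n \to 1$ together with the identification of the almost-sure theory of $\Csp(K_{k-1})$ as $\mathrm{Th}(U_{K_{k-1}})$; both are cited. Given these, the argument is the soft ``squeezing'' principle above, and the only place to be slightly careful is making the two-sided normalization estimate clean and checking the small-$k$ degenerate cases. I expect no real difficulty beyond bookkeeping.
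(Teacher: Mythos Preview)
Your proposal is correct and follows essentially the same sandwich argument as the paper: establish $\Csp(K_{k-1}) \subseteq \Forb(\cF) \subseteq \Forb(K_k)$, apply Kolaitis--Pr\"omel--Rothschild to conclude that almost all graphs in $\Forb(\cF)$ are $(k-1)$-partite, and then identify the almost-sure theory with that of the generic $(k-1)$-partite graph. Apart from a harmless sign slip in the displayed estimate (you want $1 - a_n/c_n$, not $a_n/c_n - 1$) and the unnecessarily meandering paragraph arriving at the upper inclusion, your argument matches the paper's.
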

\begin{proof}
Let $k := k_{\cF}$. 
We claim that no graph $F \in \cF$ is 
$(k-1)$-partite.  
By the definition of $k$ we have that $k \leq \chi(F)$. 
It follows that there is no homomorphism from $F$ to $K_{k-1}$, which proves the claim. 
The claim implies that every $(k-1)$-partite graph is contained in $\Forb(\cF)$. 

Next, observe that $\Forb(\cF) \subseteq \Forb(K_k)$. To see this, let $F \in \cF$ be such that $k = \min \chi(F)$. 
Suppose for contradiction that there is a graph $H \in \Forb(\cF) \setminus \Forb(K_k)$. 
Then there exists a homomorphism from $K_k$ to $H$. But then note that $F$ has a homomorphism to $K_k$ since $\chi(F) \leq k$, and hence $F$ has a homomorphism to $H$, a contradiction to the assumption that $H \in \Forb(\cF)$. 

By Theorem~\ref{thm:KPR}, the graphs in $\Forb(K_k)$ are asymptotically almost surely $(k-1)$-partite, 
and hence the graphs in $\Forb({\cF})$ are aymptotically almost surely $(k-1)$-partite as well.
Since $\Forb(\cF)$ contains the class of all $(k-1)$-partite graphs, the almost-sure theory $T$ of 
$\Forb(\cF)$ equals the \blue{(complete)} almost-sure theory of 
the class of $(k-1)$-partite graphs: \blue{if $\phi$ is a first-order sentence that holds asymptotically almost surely in the class of $(k-1)$-partite graphs, then it holds asymptotically almost surely in the class of $\Forb(K_k)$, and hence a forteriori in the class $\Forb(\mathcal F)$.} 
Therefore, $T$ is the theory of the generic $(k-1)$-partite graph by Theorem~\ref{thm:KPR-2}. 
\end{proof} 

\begin{definition}
Let $H$ be a finite undirected graph.
Then the \emph{co-chromatic number of $H$} is the smallest
$k \in {\mathbb N}$ such that there exists a graph
$G$ such that $G \not \to H$ and $G \to K_k$;
note that $K_{\chi(H)+1} \not \to H$ and
$K_{\chi(H)+1} \to K_{\chi(H)+1}$ so $k \leq \chi(H)+1$.  
\end{definition}

The co-chromatic number of $K_n$ is $n+1$. 
The co-chromatic number of $C_{2n+1}$, for $n \geq 2$, is $3$ because $C_{2n-1} \not \to H$ and $C_{2n-1} \to K_3$. 

\begin{corollary}\label{cor:cochrom}
Let $H$ be a finite undirected graph with co-chromatic number $k$. 
Then the almost-sure theory of $\Csp(H)$
equals the almost-sure theory of $\Csp(K_{k-1})$. 
\end{corollary}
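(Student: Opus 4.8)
The plan is to deduce this directly from Theorem~\ref{thm:graphs}, by rewriting $\Csp(H)$ in the form $\Forb(\cF)$ for a suitable family $\cF$ of finite undirected graphs and checking that the parameter $k_\cF$ governing that theorem coincides with the co-chromatic number of $H$. First I would set $\cF_H := \{ F \mid F \text{ is a finite undirected graph and } F \not\to H \}$ (all objects here being undirected graphs). Since $F' \to F$ and $F \to H$ imply $F' \to H$, no member of $\cF_H$ maps homomorphically into a graph of $\Csp(H)$, so $\Csp(H) \subseteq \Forb(\cF_H)$; conversely every $F \in \cF_H$ maps to itself, so any graph not in $\Csp(H)$ lies outside $\Forb(\cF_H)$. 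Hence $\Csp(H) = \Forb(\cF_H)$, and $\cF_H$ is nonempty because $K_{\chi(H)+1} \not\to H$.

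Next I would unwind the definition of the co-chromatic number. For a finite graph $G$ one has $G \to K_m$ if and only if $\chi(G) \le m$; therefore the least $m$ such that some $G$ with $G \not\to H$ satisfies $G \to K_m$ is exactly $\min\{ \chi(G) \mid G \not\to H \} = \min_{F \in \cF_H} \chi(F) = k_{\cF_H}$. In other words $k_{\cF_H} = k$. Theorem~\ref{thm:graphs}, applied to the nonempty family $\cF_H$, then yields that the almost-sure theory of $\Csp(H) = \Forb(\cF_H)$ is the first-order theory of the generic $(k-1)$-partite graph.

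It remains to identify the almost-sure theory of $\Csp(K_{k-1})$ with the same theory. Note first that $k \ge 2$: any graph of chromatic number at most $1$ is edge-free and hence maps to $H$ (we may assume $H$ has a vertex), so $\cF_H$ contains no such graph and $k_{\cF_H} \ge 2$. Thus $K_{k-1}$ is a well-defined graph, and running the first two paragraphs with $H$ replaced by $K_{k-1}$ — where now $\cF_{K_{k-1}} = \{F \mid \chi(F) \ge k\}$, which contains $K_k$, so that the co-chromatic number of $K_{k-1}$ equals $(k-1)+1 = k$ — shows that the almost-sure theory of $\Csp(K_{k-1})$ is also the first-order theory of the generic $(k-1)$-partite graph. (Equivalently, this last identification is exactly the Kolaitis--Pr\"omel--Rothschild result recalled after Lemma~\ref{lem:mc}, that the almost-sure theory of $\Csp(K_{k-1})$ equals the first-order theory of $U_{K_{k-1}}$.) Comparing the two, both almost-sure theories coincide.

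I do not anticipate a genuine obstacle: the entire mathematical content is imported through Theorem~\ref{thm:graphs} (hence ultimately the Kolaitis--Pr\"omel--Rothschild theorem), and the only thing to get right is the purely definitional identity $k_{\cF_H} = k$, together with the bookkeeping that $\cF_H$ and $\cF_{K_{k-1}}$ are nonempty and that $k - 1 \ge 1$ so that $\Csp(K_{k-1})$ is meaningful.
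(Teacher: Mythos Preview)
Your proof is correct and follows essentially the same route as the paper: define $\cF$ as the set of finite graphs not mapping to $H$, observe $\Csp(H)=\Forb(\cF)$, verify $k_\cF=k$, and invoke Theorem~\ref{thm:graphs}. You supply more detail than the paper does---in particular the check that $k\ge 2$ and the explicit re-application of the argument to $K_{k-1}$---but the structure is identical.
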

\begin{proof}
Let $\cF$ be the class of all finite undirected graphs that do \emph{not} have a homomorphism to $H$. Recall that $\Csp(H) = \Forb(\cF)$. 
Let $G$ be a graph such that $G \not \to H$ and $G \to K_k$, which exists by the definition of the co-chromatic number $k$. 
So $G \in \cF$.
Also note that $k_{\cF} = k$ by the definition of $k$. 
By Theorem~\ref{thm:graphs}, 
 the almost-sure theory of $\Forb(\cF)$ equals
  the theory of the generic $(k-1)$-partite graph,
  which equals the almost-sure theory of $\Csp(K_{k-1})$ by Theorem~\ref{thm:KPR-2}.
\end{proof} 

\begin{example}
Let $H$ be the Gr\"otzsch graph, which is the (up to isomorphism unique) triangle-free graph of chromatic number four with the smallest number of vertices. 
The co-chromatic number of $H$ is 
3 because 
$K_3 \not \to H$ and $K_3 \to K_3$. 
By Corollary~\ref{cor:cochrom} the almost-sure theory of $\Csp(H)$ equals 
the almost-sure theory of $\Csp(K_2)$, i.e., 
the almost sure theory of the class of all finite bipartite graphs. 
\end{example}

\begin{remark}\label{rem:dyer-frieze} 
   \blue{Note that Corollary~\ref{cor:cochrom} implies that 
    the generalisation of the result of Dyer and Frieze mentioned in Section~\ref{sect:random-graphs} follows from their result for $H=K_{\ell}$.} 
\end{remark}

\section{Forbidden Digraphs}
\label{sect:digraphs}
There are sets of digraphs ${\mathcal F}$ where
$\Forb({\mathcal F})$ does not have a first-order 0-1 law,
as the following example illustrates. If $D = (V,E)$ is a digraph, we also write $x \rightarrow y$ instead of $(x,y) \in E$ (and instead of $E(x,y)$ in logical formulas). We write $x_1 \to x_2 \to x_3$ as a shortcut for $x_1 \to x_2 \wedge x_2 \to x_3$ and we write $x_1 \not\to x_2$ as a shortcut for $(x,y) \notin E$ (and instead of $\neg E(x,y)$ in logical formulas).

\begin{example}\label{expl:no-01}
\blue{Let ${\mathcal F}$ be the class of disjoint unions of the directed path with four vertices $P_4$ and a $\vec C_k$ where $k$ is not a multiple of $3$.
It is easy to see that $\Forb({\mathcal F}) = \Csp(D)$ where} 
$D$ is the disjoint union of
$\vec C_3$ and $T_3$. 
If $\phi$ is the primitive positive sentence 
$$ \exists x_1,x_2,x_3 
(
x_1 \rightarrow x_2 \rightarrow x_3 \wedge x_1 \rightarrow x_3 
) $$
then 
$\phi_\omega^{\mathcal C}$ equals $1/2$ as we will see in Example~\ref{expl:combine}. \end{example}

In the following we focus on the special case where ${\mathcal F}$ is a finite set of orientations of trees. 


\subsection{Forbidden Orientations of Trees}
We will provide an explicit description of the almost-sure theory of $\Forb(\cF)$ for finite sets $\cF$ of orientations of trees. 

\begin{theorem}\label{thm:trees} 
Let $\cF$ be a finite set of finite oriented trees. 
Then the almost-sure theory of $\Forb(\cF)$
is (pseudo-finite and) $\omega$-categorical; 
In particular, $\Forb(\cF)$ has a first-order 0-1 law. 
\end{theorem}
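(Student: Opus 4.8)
By Theorem~\ref{thm:dual} (Ne\v{s}et\v{r}il--Tardif), since $\cF$ is a finite set of orientations of finite trees, there is a finite digraph $D$ with $\Forb(\cF) = \Csp(D)$; we may take $D$ to be a core. Thus it suffices to prove the statement for $\Csp(D)$ where $D$ is a finite core digraph that is the dual of a finite set of oriented trees. The plan is to invoke the general CSP result announced in the introduction --- that for every finite digraph $D$, the class $\Csp(D)$ has first-order convergence and its asymptotic theory is a finite linear combination of $\omega$-categorical theories --- and then argue that in the special case of a tree-dual $D$ this linear combination collapses to a \emph{single} $\omega$-categorical theory with weight $1$, which moreover is complete (hence a 0-1 law) and pseudo-finite.

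\textbf{Key steps.} First, I would recall the structure of the general convergence result: a random element of $\Csp(D)_n$ is, with probability tending to a limit, dominated (in the homomorphism order) by one of finitely many ``maximal'' substructures of $D$ closed under relevant symmetries, and conditioned on landing in the $i$-th such case one gets an $\omega$-categorical 0-1 law whose theory is that of the universal structure $U_{D_i}$ from Lemma~\ref{lem:mc}. Second, and this is where the tree hypothesis enters, I would show that when $D$ is the dual of a set of oriented trees there is a \emph{unique} maximal case, so the ``linear combination'' has a single term of weight $1$. The natural candidate is the following: because each $F \in \cF$ is an orientation of a tree, each $F$ has a homomorphism to the transitive tournament $T_k$ for $k$ large enough (indeed an oriented tree of ``height'' $h$ maps to $T_{h+1}$ by the level function), so for suitable $k$ every oriented tree in $\cF$ maps to $T_{k}$, whence $T_{k} \notin \Csp(D)$ is impossible --- rather, one shows the generic structure on which almost all of $\Csp(D)$ concentrates is $U_{D}$ itself, i.e.\ $\Csp(D)$ is already ``asymptotically saturated'' and its almost-sure theory is $\mathrm{Th}(U_D)$. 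Third, pseudo-finiteness is then immediate: $U_D$'s theory is the almost-sure theory of a class of finite structures, so every finite subset of it holds in all sufficiently large members of $\Csp(D)_n$ and in particular has a finite model.

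\textbf{Main obstacle.} The crux is establishing that the general linear-combination theory collapses to one term for tree-duals, i.e.\ that there is a unique dominant ``phase.'' Concretely I expect the argument to run as follows: among finite substructures $D'$ of $D$, order them by $\Csp(D') \subseteq \Csp(D)$; a random digraph in $\Csp(D)_n$ is governed, via a Kolaitis--Pr\"omel--Rothschild-type entropy/counting argument, by the $D'$ maximizing the exponential growth rate $|\Csp(D')_n|^{1/n^2}$, and ties between distinct maximizers are exactly what produce multiple terms. For $D$ a core that is a tree-dual one must show this maximizer is $D$ itself and is unique; the uniqueness should follow from the fact that $D$, being a core and a tree-dual, has no proper retract realizing the same growth rate --- any proper substructure $D'$ with $D' \to D$ but $D \not\to D'$ would omit some homomorphic image and hence strictly fewer structures. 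Making this rate comparison precise, and checking that no ``degenerate'' components of $D$ (e.g.\ isolated vertices or loops --- a tree-dual $D$ may be reflexive) spawn extra phases, is the part that will require care; I would handle it by first reducing to the case where $D$ is connected and then treating the general $D$ componentwise, observing that for tree-duals the relevant component is unique up to the symmetry that a random structure's homomorphisms concentrate on.
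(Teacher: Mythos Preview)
Your reduction to $\Csp(D)$ via Theorem~\ref{thm:dual} is correct, and the overall strategy of showing that the general linear combination of 0-1 laws collapses to a single term is the right one. However, you have misidentified \emph{which} term it collapses to, and this is a genuine gap, not a detail.

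You assert that the almost-sure theory of $\Csp(D)$ is $\mathrm{Th}(U_D)$, arguing that the growth rate $|\Csp(D')_n|^{1/n^2}$ is maximised by $D' = D$ because ``any proper substructure $D'$ with $D' \to D$ but $D \not\to D'$ would omit some homomorphic image and hence strictly fewer structures.'' But strictly fewer structures does not imply strictly smaller exponential growth rate. In fact the paper shows the opposite of your claim: the growth rate is governed by the \emph{density} of $D$ in the sense of Definition~\ref{def:density}, and the density is maximised on a proper substructure, namely the largest transitive tournament $T_\ell$ contained in $D$ (Lemma~\ref{lem:blowup}). Hence the almost-sure theory of $\Csp(D)$ equals $\mathrm{Th}(U_{T_\ell})$, not $\mathrm{Th}(U_D)$. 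Already for $D = \vec P_3$ (the dual of $\vec P_4$) these differ: $\vec P_3$ embeds into $U_{\vec P_3}$ but not into $U_{T_2}$, and the almost-sure theory is the latter.

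The real work, which your proposal does not touch, lies in two places. First, one shows (Theorem~\ref{thm:main}, Lemma~\ref{lem:blow-up}) that a random $D$-coloured digraph concentrates on an orientation of a blow-up of $K_\ell$. Second, and this is where the tree hypothesis is actually used, one must rule out nontrivial blow-ups and show the densest part is a \emph{unique} copy of $T_\ell$. The paper does this (Lemmas~\ref{Lem:noblowup}, \ref{Lem:noblowup2}, \ref{lem:blowup}) via a structural property specific to tree-duals: $D^2$ dismantles to its diagonal (from~\cite{LLT}). Together with the fact that tree-duals are acyclic and rigid (Proposition~\ref{prop:autos}), this gives uniqueness of the phase. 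Your sketch neither invokes dismantlability nor acyclicity/rigidity, so as it stands it does not explain why the tree hypothesis prevents the multiple-phase behaviour of Example~\ref{expl:no-01}.
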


Our proof uses Theorem~\ref{thm:dual} 
which asserts the existence of a digraph $D$ such that $\Csp(D) = \Forb({\mathcal F})$. The following properties of the digraph $D$ will be useful.

\begin{proposition}[\cite{LLT}]\label{prop:autos}
For every  finite non-empty set of orientations of finite trees $\cF$ 
the dual digraph $D$ of $\cF$ 
(see Theorem~\ref{thm:dual}) has no directed cycles, and has no non-trivial automorphisms.  
\end{proposition}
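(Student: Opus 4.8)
The plan is to prove both claims—that the dual digraph $D$ has no directed cycles, and that it has no non-trivial automorphisms—by exploiting the structure of $D$ as a core digraph whose $\Csp$ is a class defined by forbidding orientations of trees. For the acyclicity, the key observation is that $\Forb(\cF)$ is closed under taking arbitrary long directed paths and, more to the point, contains all acyclic orientations in a suitable sense: since every oriented tree $F \in \cF$ has a vertex of in-degree zero, and finite oriented trees have bounded ``algebraic length'' (the net number of forward minus backward edges along the unique tree-path between two vertices is bounded), one can argue that if $D$ contained a directed cycle $\vec C_m$, then $D$ would admit homomorphisms from digraphs that $\Forb(\cF)$ cannot contain, or conversely that $\vec C_m \to D$ forces some $F \in \cF$ to map to $D$. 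Concretely, I would use the characterization that $G \to D$ iff $G \in \Forb(\cF)$ together with the fact that directed cycles, being ``balanced'' closed walks, can be homomorphically mapped onto by trees with prescribed algebraic length; since $\cF$ is finite there is a uniform bound $\ell$ on the algebraic lengths of edges in members of $\cF$, and a directed cycle of length a multiple of $\ell! $ (say) would receive a homomorphism from some $F \in \cF$ wrapped around it, contradicting $\vec C_m \in \Csp(D)$. The cleanest route is probably to cite or re-derive the fact from \cite{LLT} that the Ne\v{s}et\v{r}il--Tardif dual of a set of oriented trees is itself ``tree-like'' enough to be acyclic—specifically, the dual can be taken to be a core that is an orientation of a graph with no ``unbalanced'' cycles, forcing acyclicity of the core.

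For the second claim, once $D$ is known to be a finite acyclic core digraph, I would show it has trivial automorphism group using a levelwise rigidity argument. In an acyclic digraph one has a well-defined notion of a topological layering; but a bare layering is not automorphism-invariant in a way that immediately kills automorphisms, so instead I would use Lemma~\ref{lem:orb-pp}: since $D$ is a core, every orbit of $\Aut(D)$ is primitive-positive definable in $D$. The strategy is to show that in fact every singleton $\{u\}$, $u \in V(D)$, is primitive-positive definable, which forces each orbit to be a singleton and hence $\Aut(D)$ trivial. To get singletons pp-definable, I would distinguish vertices of $D$ by the (finite) set of ``rooted oriented tree types'' they realize—i.e., for each oriented tree $F$ with a distinguished vertex $r$, whether there is a homomorphism $F \to D$ sending $r$ to $u$. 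Because $\Csp(D) = \Forb(\cF)$ and $D$ is a core realizing exactly the tree-obstruction structure, two vertices realizing the same rooted-tree types must be identified by an endomorphism (built by a back-and-forth / amalgamation of partial homomorphisms along trees), and since $D$ is a core that endomorphism is an automorphism; iterating, any two such vertices lie in the same orbit, and conversely the rooted-tree type is pp-expressible, so it pins each orbit down. The finiteness of $\cF$ and the tree structure are what make the relevant types finitely many and pp-definable.

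The main obstacle I anticipate is the acyclicity step, and in particular making precise the ``algebraic length'' / ``balancedness'' argument that a long directed cycle in $D$ forces a forbidden tree to map into $D$. The subtlety is that an oriented tree maps to a directed cycle $\vec C_m$ only if all its edges have algebraic length divisible by $m$, which is the \emph{opposite} of what one wants directly; the correct move is to observe that if $D$ had a directed cycle then $D$ would contain, as a retract, a directed cycle $\vec C_m$, and then to note that $\vec C_m \in \Csp(D) = \Forb(\cF)$ is automatic, so this alone gives no contradiction—hence the real content must come from the \emph{core} hypothesis plus the tree-duality. I therefore expect to lean on Proposition-style input from \cite{LLT}: the Ne\v set\v ril--Tardif construction produces a dual whose core is acyclic because the forbidden trees, being connected and acyclic, cannot ``detect'' a directed cycle, so including one in $D$ is never necessary and a core never contains a superfluous directed cycle. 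Turning this intuition into a clean argument—ideally: a directed cycle in a core $D$ with $\Csp(D) = \Forb(\cF)$ would make $\vec C_m$ a retract, but then replacing that directed cycle by a directed path of the same length yields a homomorphically equivalent smaller digraph still avoiding all of $\cF$ (since trees can't distinguish them), contradicting coreness—is the step I would spend the most care on. Given the reference \cite{LLT} is cited for exactly this proposition, I would ultimately present the acyclicity as following from their analysis and give the automorphism-rigidity part in full via Lemma~\ref{lem:orb-pp} as sketched above.
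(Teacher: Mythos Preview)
Your acyclicity argument has a genuine gap, and in fact you overlook the one-line proof. The key fact you miss (and state backwards) is that \emph{every} orientation of a tree maps homomorphically to \emph{every} directed cycle $\vec C_m$: fix any vertex as root and assign to each vertex the net number of forward-minus-backward edges along the unique tree-path from the root, reduced mod $m$; since a tree has no cycles this is well defined, and every edge $(u,v)$ of the tree then satisfies $f(v)\equiv f(u)+1\pmod m$, so $f$ is a homomorphism to $\vec C_m$. There is no divisibility obstruction whatsoever. Consequently, if $D$ contained a directed cycle, every $F\in\cF$ would map into $D$; but $D\in\Csp(D)=\Forb(\cF)$, so no $F\in\cF$ maps to $D$ --- contradiction. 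This is exactly the paper's argument, given in one sentence. Your discussion of algebraic lengths, retracts, and ``replacing cycles by paths'' is unnecessary, and your asserted restriction (``an oriented tree maps to $\vec C_m$ only if all its edges have algebraic length divisible by $m$'') is simply false.

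For the automorphism-rigidity claim, the paper gives no argument at all: it just cites Lemma~4.1 of \cite{LLT}. Your plan to prove it via Lemma~\ref{lem:orb-pp} by showing that every singleton of $D$ is pp-definable through rooted-tree types is a plausible self-contained route, but it is substantially more than the paper does, and you only sketch the crucial step (that distinct vertices of the core dual realize distinct rooted oriented-tree types). If you intend to replace the citation by a proof, that step needs to be carried out in full.
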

\begin{proof}
If $D$ contains a directed cycle, then all orientations of trees homomorphically map to $D$, a contradiction. 
The fact that it has no non-trivial automorphisms is stated in Lemma 4.1 in~\cite{LLT}.  
\end{proof} 

\subsection{Density}
The key to studying random digraphs with a homomorphism to a finite digraph $D$
is a certain notion of density, which has already been studied for undirected graphs~\cite{GraphMaxima} 
and which we also use for directed graphs. 

\begin{definition}
\label{def:density}
Let $D$ be a digraph. Then the \emph{density of $D$} is defined to be the maximum of 
\begin{align}
\sum_{(u,v) \in E(D)} \delta(u)\delta(v)
\label{eq:max}
\end{align}
over all probability distributions $\delta \colon V(D) \to [0,1]$ (satisfying $\sum_{v \in V(D)} d(v) = 1$). 
If $\delta$ realizes this maximum, we call it a \emph{density function of $D$}, and the \emph{support  
$\{v \in V(D) \mid \delta(v) > 0\}$
of $\delta$}
is called a \emph{densest subgraph of $D$}.
\end{definition}
%

The following is a reformulation 
of a result of 
Motzkin and Straus~\cite{GraphMaxima} for oriented graphs.\footnote{The authors thank Oleg Pikhurko for pointing this out to us.}

\begin{lemma}[\cite{GraphMaxima}]
Let $D$ be an oriented graph. Then the density of $D$
equals $$\frac{1}{2}(1 - 1/k) = 
\frac{k-1}{2k}$$ where $k$ is \blue{the largest integer} such that $D$ has a subgraph which is an orientation of a  $K_k$. 
\end{lemma}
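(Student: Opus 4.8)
The statement relates a continuous optimization quantity (the density, i.e.\ the maximum of the quadratic form $\sum_{(u,v)\in E(D)}\delta(u)\delta(v)$ over the simplex) to a combinatorial quantity (the largest $k$ such that $D$ contains an orientation of $K_k$). Since $D$ is an oriented graph, each undirected edge is oriented exactly one way, so for the purpose of evaluating the quadratic form the orientation is irrelevant: $\sum_{(u,v)\in E(D)}\delta(u)\delta(v) = \sum_{\{u,v\}\in E(G)}\delta(u)\delta(v)$ where $G$ is the underlying undirected graph of $D$. Thus the density of $D$ equals the Motzkin--Straus optimum for $G$. The plan is therefore to reduce to the classical Motzkin--Straus theorem: the maximum of $\sum_{\{u,v\}\in E(G)}x_u x_v$ over the standard simplex equals $\tfrac12(1-1/\omega(G))$, where $\omega(G)$ is the clique number of $G$. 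It then remains only to observe that $\omega(G) = k$, the largest integer such that $D$ has a subgraph that is an orientation of $K_k$: a subgraph of $D$ on a vertex set $S$ is an orientation of $K_{|S|}$ precisely when $S$ induces a clique in $G$.

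\textbf{Key steps, in order.} First, I would state precisely the undirected Motzkin--Straus theorem as it appears in \cite{GraphMaxima}, namely that $\max\{\sum_{\{u,v\}\in E(G)} x_u x_v : x \ge 0, \sum_v x_v = 1\} = \tfrac12\bigl(1 - 1/\omega(G)\bigr)$. Second, I would record the elementary identity that for an oriented graph $D$ with underlying undirected graph $G$ and any $\delta$, the sum in \eqref{eq:max} coincides with the undirected sum over $E(G)$, since antisymmetry of $E(D)$ means each undirected edge contributes exactly one ordered pair. Third, I would verify the dictionary $\omega(G)=k$: if $S\subseteq V(D)$ induces a clique in $G$, then $D[S]$ is an orientation of $K_{|S|}$ (it is irreflexive and antisymmetric, being a subdigraph of an oriented graph, and its underlying graph is complete on $S$); conversely an orientation-of-$K_k$ subgraph of $D$ has underlying complete graph on its $k$ vertices, hence gives a $k$-clique in $G$. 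Fourth, combining these three facts, the density of $D$ equals $\tfrac12(1-1/\omega(G)) = \tfrac12(1-1/k) = \tfrac{k-1}{2k}$.

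\textbf{Main obstacle.} There is essentially no serious obstacle here: the whole lemma is a translation of Motzkin--Straus into the language of oriented graphs, and the only thing to be careful about is the factor of $\tfrac12$ and the precise normalization of $E(D)$ versus $E(G)$. The one point that deserves a sentence of care is that the classical Motzkin--Straus statement is usually phrased for the symmetric form $\sum_{u\neq v, \{u,v\}\in E} x_u x_v$, which equals $2\sum_{\{u,v\}\in E(G)} x_u x_v$; depending on which normalization \cite{GraphMaxima} uses, the constant is either $1-1/k$ or $\tfrac12(1-1/k)$, and one must match it to the asymmetric sum in Definition~\ref{def:density}, where each directed edge $(u,v)\in E(D)$ is counted once. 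Once that bookkeeping is fixed, the proof is a two-line deduction. (If one wanted a self-contained argument, one could instead reprove Motzkin--Straus directly: the upper bound $\le \tfrac{k-1}{2k}$ follows by a ``shifting mass along a non-edge'' argument showing an optimal $\delta$ can be taken supported on a clique, on which the form is maximized by the uniform distribution; the lower bound follows by exhibiting the uniform distribution on an orientation of $K_k$ inside $D$. But citing \cite{GraphMaxima} is cleaner.)
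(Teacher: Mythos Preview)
Your proposal is correct and matches the paper's treatment: the paper does not give a proof of this lemma at all, but simply states it as a reformulation of the Motzkin--Straus theorem from \cite{GraphMaxima}. Your reduction---observing that for an oriented graph the sum $\sum_{(u,v)\in E(D)}\delta(u)\delta(v)$ coincides with the undirected edge sum over the underlying graph $G$, and that $\omega(G)$ equals the largest $k$ for which $D$ contains an orientation of $K_k$---is exactly the intended translation, and your care about the factor $\tfrac12$ is appropriate.
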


\begin{definition}[Blow-up graphs and digraphs]
Let $G$ be an undirected graph. 
A \emph{blow-up of $G$} is a graph obtained from 
$G$ by adding for each vertex $u \in V(G)$ arbitrarily many `copies' of $u$, i.e., vertices with the same neighbourhood as $u$; among the different copies of $u$ there are no edges. 
Blow-ups of digraphs $D$ are defined analogously. 
\end{definition}

\begin{lemma}\label{lem:blow-up}
Let $D$ be an oriented graph
and let $\delta$ be a density function of $D$. 
Then the subgraph of $D$ induced by the support of $\delta$ is an orientation of a blow-up of $K_k$, for some $k \geq 2$, such that the sum over $\delta(u)$ for all the copies $u$ of the same vertex of $K_k$ equals $1/k$. 
\end{lemma}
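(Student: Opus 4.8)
The plan is to combine the Motzkin--Straus type lemma (which identifies the density of $D$ with $\frac{k-1}{2k}$, where $k$ is the largest size of an oriented clique inside $D$) with a direct analysis of when equality is attained in the maximization \eqref{eq:max}. First I would fix a density function $\delta$ and let $S$ be its support; I claim $S$ induces an orientation of a complete multipartite graph, and in fact an orientation of a blow-up of $K_k$. The starting observation is that if $u,v \in S$ are \emph{non-adjacent} in $D$ (no edge in either direction) then, writing the objective \eqref{eq:max} as a function of $\delta(u)$ and $\delta(v)$ with their sum $\delta(u)+\delta(v)=s$ held fixed and all other values frozen, the objective is \emph{linear} in $\delta(u)$ (the only terms involving $u$ or $v$ are $\sum_{w}\big(\delta(u)[uw\text{ or }wu\in E] + \delta(v)[vw\text{ or }wv\in E]\big)\delta(w)$, and there is no $\delta(u)\delta(v)$ term since $u,v$ are non-adjacent). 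Hence we may push the mass to one endpoint without decreasing the objective, which would let us shrink the support --- unless moving the mass is forced to be neutral, i.e.\ unless $u$ and $v$ have exactly the same in- and out-neighbourhoods within $S$. This is precisely the statement that non-adjacent vertices of $S$ are ``copies'' of one another, so the relation ``equal neighbourhoods'' on $S$ is an equivalence relation whose classes are independent sets and between any two distinct classes there is a full set of edges (with a consistent orientation, since $D$ is an oriented graph): that is, $D[S]$ is an orientation of a blow-up of some complete graph $K_k$.

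Next I would pin down $k$ and the class masses. Contracting each copy-class to a single vertex turns $D[S]$ into an orientation of $K_k$ on which $\delta$ induces a probability distribution $\bar\delta$ (put $\bar\delta(\text{class}) := \sum_{u \in \text{class}} \delta(u)$), and the objective value is unchanged: $\sum_{(u,v)\in E(D[S])}\delta(u)\delta(v) = \sum_{(a,b)\in E(K_k\text{-orientation})}\bar\delta(a)\bar\delta(b)$. So the density of $D$ equals the maximum of $\sum_{\text{pairs}}\bar\delta(a)\bar\delta(b)$ over distributions on $k$ points, which is $\frac12\big(\sum_a\bar\delta(a)\big)^2 - \frac12\sum_a\bar\delta(a)^2 = \frac12\big(1 - \sum_a\bar\delta(a)^2\big)$; by Cauchy--Schwarz / convexity this is at most $\frac12(1-1/k)$ with equality iff $\bar\delta$ is uniform, i.e.\ $\bar\delta(a) = 1/k$ for every class. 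On the other hand, by the quoted Motzkin--Straus reformulation the density of $D$ is exactly $\frac{k^*-1}{2k^*}$ where $k^*$ is the largest oriented-clique number of $D$; since $\frac{k-1}{2k}$ is increasing in $k$ and $D[S]$ contains an orientation of $K_k$ (pick one vertex per class) we get $k \le k^*$, while the value we just computed, $\frac{k-1}{2k}$, must equal $\frac{k^*-1}{2k^*}$, forcing $k = k^*$. In particular $k \ge 2$ because if $S$ were a single independent class the density would be $0 < \frac{k^*-1}{2k^*}$ (note $k^* \ge 2$ whenever $D$ has an edge; and if $D$ has no edge the statement is vacuous). Combining, the support induces an orientation of a blow-up of $K_k$ with the sum of $\delta$ over the copies of each $K_k$-vertex equal to $1/k$, which is exactly the assertion.

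The main obstacle I anticipate is making the ``push the mass'' argument fully rigorous when $\delta$ is only \emph{a} maximizer rather than the unique one: I must argue not merely that the support \emph{can} be taken to be a blow-up of $K_k$, but that \emph{every} density function has this support structure, which is what the lemma claims. The linearity observation shows that if $u,v \in S$ are non-adjacent, then both extreme redistributions (all mass on $u$, all mass on $v$) are also maximizers, and hence so is everything in between; to conclude that $u$ and $v$ actually have identical neighbourhoods in $S$ one uses that the linear coefficient is $\sum_w \delta(w)\big([uw\text{ or }wu\in E] - [vw\text{ or }wv\in E]\big)$ and this must vanish, but vanishing of a single weighted sum does not immediately give equal neighbourhoods. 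The clean fix is to iterate: repeatedly replace $\delta$ by an equally-good maximizer with strictly smaller support whenever two non-adjacent support vertices have \emph{different} $S$-neighbourhoods (moving mass toward the vertex with the weightier neighbourhood strictly increases the objective, contradicting maximality of the \emph{original} $\delta$ once we observe the coefficient cannot be zero in that case) --- so in fact for the original $\delta$, any two non-adjacent support vertices must already have equal $S$-neighbourhoods, i.e.\ the coefficient is forced to be zero for the structural reason that the neighbourhoods coincide, not by accident. Spelling out this contradiction carefully --- in particular handling the orientation bookkeeping so that ``copies'' genuinely have the same in- and out-neighbours, as required by the blow-up definition for digraphs --- is the only delicate point; the rest is the convexity computation above.
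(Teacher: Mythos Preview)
Your overall strategy is sound and close to the paper's: both hinge on the mass-shifting observation that for non-adjacent support vertices $u,v$ the objective is linear in the redistribution parameter, and both finish with the convexity computation on a complete graph. The paper packages this as an induction on $|V(D)|$; you attempt a direct structural argument on the support.

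The genuine gap is in your ``clean fix''. You correctly note that the linear coefficient
\[
\sum_w \delta(w)\big([w\in N(u)]-[w\in N(v)]\big)
\]
must vanish by maximality, and that this alone does not force $N(u)\cap S = N(v)\cap S$. But your next sentence asserts that ``the coefficient cannot be zero in that case'', which contradicts what you just established: the coefficient is \emph{always} zero at a maximizer (it is the difference of weighted degrees, and first-order optimality forces all support vertices to have the same weighted degree $\lambda$), regardless of whether the neighbourhoods agree. So no strict increase is available and your contradiction does not fire.

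What is actually needed is a \emph{second} application of first-order optimality, to the shifted distribution. Push all mass from $u$ to $v$, obtaining another maximizer $\delta'$ with $\delta'(u)=0$. If there is $w\in S\cap(N(u)\setminus N(v))$, then $w$'s weighted degree under $\delta'$ equals $\lambda-\delta(u)$ (it lost $u$'s contribution and gained nothing from $v$, since $v\notin N(w)$), while $v$'s weighted degree under $\delta'$ is still $\lambda$ (since $u\notin N(v)$). Both $v$ and $w$ lie in the support of $\delta'$, contradicting first-order optimality for $\delta'$; symmetrically one rules out $w\in N(v)\setminus N(u)$. This is precisely the comparison the paper performs inside its inductive step (there it compares $v_1$ and $v_3$ after applying the inductive hypothesis to $D-v_2$). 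Without this second step your proof does not establish the claimed structure of the support for an \emph{arbitrary} density function $\delta$.

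Two minor points. Your parenthetical ``with a consistent orientation'' between classes is neither justified nor needed: the lemma only asserts that $D[S]$ is an orientation of a complete $k$-partite graph, which imposes no constraint on how edges between two fixed classes are oriented. And the detour through the Motzkin--Straus value $(k^*-1)/(2k^*)$ to force $\bar\delta$ uniform is unnecessary: once the objective is rewritten as $\tfrac{1}{2}\big(1-\sum_a\bar\delta(a)^2\big)$, any non-uniform $\bar\delta$ can be strictly improved by equalising two unequal class masses, contradicting maximality of $\delta$ directly --- this is how the paper handles its $K_n$ base case.
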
 
\begin{proof} 
We prove the statement by induction on $n:=|V(D)|$.
If $n = 1$ then the statement is trivial. 
If $n > 1$ 
we distinguish three cases. 
\begin{itemize}
\item for some $u \in V(D)$ we have $\delta(u) = 0$; in this case, we apply the inductive assumption for the oriented graph $D - u$ and are done. 
\item for all $u \in V(D)$ we have that 
$\delta(u) > 0$, and $D$ is not an orientation of  $K_n$. Let $v_1,\dots,v_n$ be the vertices of $D$ 
and assume without loss of generality that
neither $(v_1,v_2)$ nor $(v_2,v_1)$ is an edge in $D$. We will write $v_1,\ldots,v_n$ for the vertices of $D$.
For $x_1,\ldots,x_n\in \mathbb R$, define $$f(x_1,\dots,x_n) := \sum_{(v_i,v_j) \in E(D)} x_i x_j.$$ 
Suppose that $x_1,\dots,x_n \geq 0$ are chosen  such that 
$x_1+\cdots+x_n = 1$
 and $f(x_1,\dots,x_n)$ is maximal. 
 Let $E$ be $E(D) \cup \{(v,u) \mid (u,v) \in E(D)\}$. 
 Note that if $\sum_{(v_1,v_i)\in E} x_i\neq \sum_{(v_2,v_i)\in E} x_i$ then we do not have a maximum of $f$ at  $(x_1,\ldots,x_n)$. Indeed, if  $\sum_{(v_1,v_i)\in E} x_i> \sum_{(v_2,v_i)\in E} x_i$, then $$f(x_1+t,x_2-t,x_3,\ldots,x_n)>f(x_1,\ldots,x_n)$$ for all $t\in [0,x_2]$, which contradicts our assumption.
Therefore, we have $$\sum_{(v_1,v_i)\in E} x_i=\sum_{(v_2,v_i)\in E} x_i$$ 
and for all $t \in [0,x_2]$  $$f(x_1+t,x_2-t,x_3,\dots,x_n) = f(x_1,x_2,x_3,\dots,x_n).$$ 
Applied to $x_i := \delta(v_i)$ for $i \in \{1,\dots,n\}$, 
this means that $$(\delta(v_1)+\delta(v_2),0,\delta(v_3),\ldots,\delta(v_n))$$ 
also maximises $f$, and hence
defines another density function $\delta'$ of $D$.
We may apply the inductive assumption 
 to the graph $D - v_2$ and the density function $\delta'$, and obtain that $D-v_2$ has a subgraph $H$ which is an orientation of a blow-up of $K_k$, for some $k \geq 2$, such that the sum over $\delta'(u)$ for all the copies $u$ of a vertex of $K_k$ equals $1/k$.

We claim that the subgraph $H'$ of $D$ with vertices $V(H) \cup \{v_2\}$ is a blow-up of $K_k$ as well.
We first notice that for any $v_3 \in V(H)$, if $(v_1,v_3)\notin E$ then $(v_2,v_3)\notin E$, because otherwise  $\sum_{(v_1,v_i)\in E} \delta(v_i) = \sum_{(v_3,v_i)\in E} \delta(v_i) - \delta(v_2)$, so $\sum_{(v_1,v_i)\in E} \delta(v_i) < \sum_{(v_3,v_i)\in E} \delta(v_i)$, in contradiction to $\delta$ being a density function of $D$. 
Moreover, since  $\sum_{(v_1,v_i)\in E} \delta(v_i) =\sum_{(v_2,v_i)\in E} \delta(v_i)$, necessarily $v_2$ has an edge in $E$ to every other vertex in $V(H)$ connected to $v_1$ in $E$: otherwise, $\sum_{(v_2,v_i)\in E} \delta(x_i)<\sum_{(v_1,v_i)\in E} \delta(x_i)$ 
which again leads to a contradiction to the maximality of $\delta$. 

So $H'$ is indeed a blow-up of $K_k$; note that by the induction hypothesis 
the sum of
$\delta(u)$ over all the copies $u$ of the same vertex of $K_k$ equals $1/k$, which proves the statement of the lemma.  
\item $D$ is an orientation of $K_n$. 
Note that in this case the probability distribution $\delta$ on $\{v_1,\dots,v_n\}$ maximises
\begin{align}
\frac{(\delta(v_1)+\cdots+\delta(v_n))^2 - \delta(v_1)^2 - \cdots - \delta(v_n)^2}{2} = \frac{1 - (\delta(v_1)^2 + \cdots + \delta(v_n)^2)}{2}.
\label{eq:balance}
\end{align}
The expression $\delta(v_1)^2 + \cdots + \delta(v_n)^2$ is minimised 
if $\delta(v_1) = \cdots = \delta(v_n) = 1/n$. 
To see this, suppose that $\delta(v_1) > 1/n > \delta(v_2)$. 
Then the family $$(y_1,\ldots,y_n) := (\delta(v_1)-1/n+\delta(v_2),1/n,\delta(v_3),\ldots,\delta(v_n))$$ 
is such that $\sum_i y_i=1$ and $y_1^2+\cdots+y_n^2 < \delta(v_1)^2+\cdots+\delta(v_n)^2$. Indeed, for all $x>y\in[0,1]$, $x^2+y^2\geq (x-\varepsilon)^2+(y+\varepsilon)^2$ for all $\varepsilon\in (0,x-y)$.
We can reason analogously if $\delta(v_i) > 1/n > \delta(v_j)$, for $i,j \in \{1,\dots,n\}$, and therefore we can modify the family finitely many times to obtain a family $x_i$ satisfying $x_1 = \cdots = x_n = 1/n$. Hence, the maximum of \eqref{eq:balance}  
     is attained if all the $\delta(v_i)$ are equal.

\end{itemize}
This concludes the inductive step and the lemma is proven.  \end{proof}

If $D$ does not contain directed cycles, we 
can refine the statement of Lemma~\ref{lem:blow-up} even further. 

\begin{figure}
\begin{center}
\includegraphics[scale=.5]{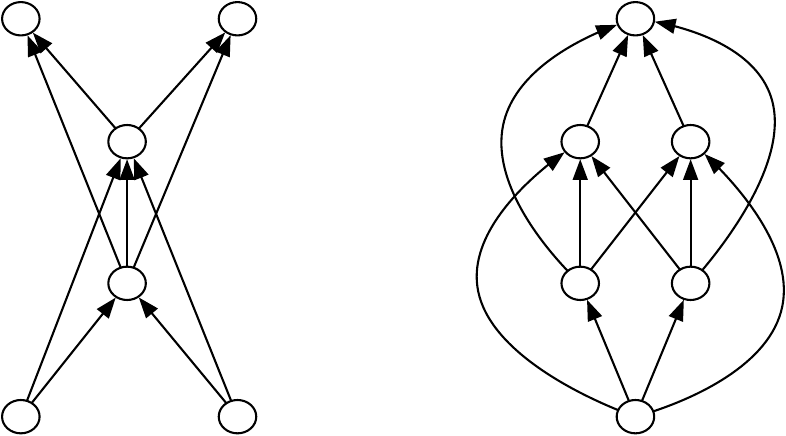}
\end{center} 
\caption{Two 3-butterflies.}
\label{fig:butterfly}
\end{figure}

\begin{definition}[Butterfly]
A \emph{$k$-butterfly}, for $k \geq 1$, 
is a blow-up of $$T_k' := (\{0,\dots,k\}, \{(i,j) \mid i < j \} \setminus \{(0,k)\}).$$ 
\end{definition}

For an illustration, see Figure~\ref{fig:butterfly}. Note that both a blow-up of $T_k$ and a $k$-butterfly have a subgraph isomorphic to $T_k$. However, note
that $T_k'$ is a core and does not have a homomorphism to $T_k$, and $T_{k+1}$ does not have a homomorphism to $T_k'$. 

\begin{lemma}\label{lem:butterfly}
Let $D$ be an oriented graph without directed cycles
and let $\delta$ be a density function of $D$. 
Then the subgraph $S$ of $D$ induced by $\{v \in V(D) \mid \delta(v) > 0\}$ is either a blow-up of $T_k$ or a $k$-butterfly, for some integer $k \geq 1$. 
In the first case, the sum over $\delta(u)$ for all the copies $u$ of a vertex of $T_k$ equals $1/k$. 
In the second case, the sum over 
$\delta(u)$ for all the copies $u$ of a vertex from 
$2,\dots,k-1$ equals $1/k$. 
\end{lemma} 
\begin{proof}
By Lemma~\ref{lem:blow-up} we have that $S$
induces an orientation of a blow-up of $K_k$, for some $k$. If $S$ is the orientation of a blow-up of $T_k$ then we are done. Otherwise, note that 
a blow-up of $K_k$ contains at least one copy of $K_k$. 
By the assumption that $D$ does not contain directed cycles, any orientation of this copy must be isomorphic to $T_k$. So  
there must be
a vertex $u$ of $K_k$ such that not all copies 
of $u$ are oriented as in a blow-up of $T_k$.
Again by the assumption that $D$ does not contain directed cycles, this vertex can only be 
the vertex $1$ or the vertex $k$ of $T_k$. 
The statement about $\delta$ then follows from
Lemma~\ref{lem:blow-up}.
\end{proof}

\subsection{Random digraphs with a homomorphism to $D$}  
In this section we show that a random digraph with a homomorphism to a fixed finite digraph $D$
asymptotically almost surely has a homomorphism to $D$ 
whose image is contained 
in a subgraph $S$ of $D$ which is an orientation of a blow-up of $K_{\ell}$. In fact, we show a stronger statement about the uniform distribution on all pairs $(G,f)$ where $G$ is a graph with vertex set $\{1,\dots,n\}$ and $f$ is a homomorphism from $G$ to $D$; we refer to these pairs as \emph{$D$-coloured digraphs}.

\begin{theorem}
\label{thm:main}
Let $D$ be a finite oriented graph. 
Let $\ell$ be \blue{the largest integer} so that $D$ contains
a copy of an orientation of a blow-up of $K_{\ell}$. 
Then asymptotically almost surely a $D$-colored digraph 
$(G,f)$ is such that the image of $f$ induces a copy $S$ of an orientation of a blow-up of $K_{\ell}$, and $|f^{-1}(v)| \geq \lfloor n/k \rfloor$ for every $v \in V(S)$.
\end{theorem}

The proof of Theorem~\ref{thm:main} below requires some 
tools from convergence of measures, for example from \cite{Billingsley}. We recall the results we need.
For a compact Polish space $S$, we denote by $\mathcal{P}(S)$ the space of probability measures on $S$. 
If $\mu \in {\mathcal P}(S)$ and $f$ is a continuous real function on $S$, then
we write $\mu f$ for $\int_S f \mathrm{d}\mu$. 
In the \emph{weak convergence topology}  a sequence $(\mu_n)_{n \in \mathbb N}$ in $\mathcal{P}(S)$ converges to $\mu$ if and only if 
$\mu_n f \to \mu f$ for every bounded, continuous real function $f$ on $S$ (see, e.g., \cite[Section 1]{Billingsley}). 
This topology makes $\mathcal{P}(S)$ a Polish space. 

A family $\Pi$ of probability measures is called 
\begin{itemize}
    \item \emph{tight} if for every $\epsilon > 0$ there exists a compact set ${\mathcal K}$ such that $\mu({\mathcal K}) > 1 - \epsilon$ for every $\mu \in \Pi$. 
    \item \emph{relatively compact} if every sequence of elements of $\Pi$ contains a weakly convergent subsequence. 
\end{itemize}
Since $\mu(S) = 1$ and $S$ is compact, we have that 
${\mathcal P}(S)$ is tight. Prokhorov's theorem states that if $\Pi$ is tight, then it is relatively compact (see \cite[Theorem 5.1]{Billingsley}).

\begin{corollary}\label{cor:rel-compact} 
    $\mathcal{P}(S)$ is relatively compact. 
\end{corollary}


\begin{proof}[Proof of Therorem~\ref{thm:main}]
Let ${\mathcal Z}$ be the subset of $[0,1]^{\mathrm{V}(D)}$ of tuples summing to $1$.
For a homomorphism $f$ of $G$ to $D$, the vector $(g_i)_{i\in \mathrm{V}(D)} \in {\mathcal Z}$ where $g_i=\frac{|f^{-1}(i)|}{n}$ will be  called the \emph{repartition of $f$}. 
We consider the uniform distribution on $D$-coloured digraphs, i.e., every pair $(G,f)$ where $G$ is a digraph with vertex set $\{1,\dots,n\}$ and $f$ is a homomorphism from $G$ to $D$ is equally likely. Let $\mu_n$ be the distribution of the random variable that maps $(G,f)$ to the repartition of $f$. Then
$\mu_n$ is a probability measure on ${\mathcal Z}$. Note that $\mathcal{Z}$ is compact and Polish so we can apply the results from the first paragraph of the proof.


Let us denote by $\mathcal{O}$ the set of tuples $(x_i)_{i\in \mathrm{V}(D)} \in \mathcal Z$ maximizing $\sum_{i,j\in \mathrm{E}(D)} x_ix_j$. By Lemma \ref{lem:blow-up}, the set $\mathcal{O}$ consists of tuples whose non-zero entries are all in an orientation of a blow-up of $K_\ell$.
    We will show that for any convergent subsequence of 
    $(\mu_n)_{n \in \mathbb N}$
    we have that 
    $\mu_n({\mathcal Z} \setminus \mathcal{O})$ goes to zero when $n$ goes to $\infty$. Since the space of measures on $\mathcal Z$ is sequentially compact, this means that with probability going to $1$,
    for an element $(G,f)$ 
    the repartition of $f$ is in ${\mathcal O}$. 
    This also implies that asymptotically almost surely, $G$ has a homomorphism to $D$ whose image is contained in a subgraph of $D$ which is an orientation of a blow-up of ${\mathcal K}_{\ell}$.
    

    For any ${\mathcal A} \subseteq {\mathcal Z}$, 
    denote by $d_{\cA}$ the maximum of $\sum_{(i,j) \in \mathrm{E}(D)} x_ix_j$ over $\blue{(x_i)_{i\in \mathrm{V}(D)}} \in \mathcal A$. 

\begin{fact}\label{fact:one}
For any 
 $\cA \subseteq {\mathcal Z}$, we have
    \[\mu_n(\cA) \leq \frac{|D|^n 2^{n^2 d_\cA}}{ 2^{\ell(\ell-1)\lfloor \frac{n}{\ell}\rfloor^2}} \sim \frac{|D|^n 2^{n^2 d_\cA}}{2^{n^2 d}},
    \]
    where for two sequences $(f_n)_n$ and $(g_n)_n$ of $\mathbb R^*$, $f_n \sim g_n $ means $f_n/g_n \to 1$.
\end{fact}

\begin{proof}[Proof of the fact] We have
    \begin{align*}
    \mu_n(\cA) = \frac{|\{ (G,f) \mid G \in \Csp(D)_n, f \colon G \to D \text{ with repartition in } \cA \} |}{ |\{(G,f) \mid G \in \Csp(D)_n, f \colon G \to D \}|}.
    \end{align*}
    Therefore, it is enough to prove that the number of $D$-coloured digraphs whose repartition is in $\cA$ is at most $|D|^n 2^{n^2 d_\cA}$ and that $|\mathrm{CSP}(D)_n| \geq 2^{\ell(\ell-1)\lfloor \frac{n}{\ell}\rfloor^2} \sim 2^{n^2 d}$.

If $\delta=(\delta_i)_{i\in \mathrm{V}(D)}$, 
we write $\binom{n}{n\delta}$ 
to be the multinomial coefficient $\binom{n}{(n\delta_i)_{i\in \mathrm{V}(D)}}$ if for all $i \in V(D)$ we have $\delta_i = k_i / n$ for some $k_i \in {\mathbb N}$, and $0$ otherwise.
    The number of $D$-coloured digraphs $(G,f)$ with
    $G \in \Csp(D)_n$ and the repartition of $f \colon G \to D$ equal to $\delta \in \cA$ is 
   $$ \binom{n}{n\delta} 2^{n^2 \sum_{(i,j)\in E(D)} \delta_i \delta_j}.$$
    Therefore, the number of $D$-coloured digraphs $(G,f)$ such that the repartition of $f$ is in $\cA$ is 
\begin{align*}\sum_{(\delta_i)_{i\in \mathrm{V}(D)} \in \cA} 
\binom{n}{n\delta} 2^{n^2 \sum_{(i,j)}\in E(D) \delta_i \delta_j} \underbrace{2^{n^2 \sum_{i,j\in E(D)} \delta_i \delta_j}}_{\leq 2^{n^2 d_\cA}} &\leq 2^{n^2 d_{\cA}} \sum_{(\delta_i)_{i\in \mathrm{V}(D)} \in \cA}{\binom{n}{n\delta}} \\ &\leq |D|^n 2^{n^2 d_{\cA}}. \end{align*}
    For the other inequality, for every $o\in \mathcal{O}$
    the number of graphs admitting a homomorphism with repartition $o$ is at least $2^{\ell(\ell-1)\lfloor \frac{n}{\ell}\rfloor^2}$.
\end{proof}

\begin{fact}\label{fact:two}
    Suppose that ${\mathcal A} \subseteq {\mathcal Z} \setminus {\mathcal O}$ is compact. Then 
    $\mu_n(\cA) \to 0$. 
\end{fact}
\begin{proof}[Proof of Fact~\ref{fact:two}]
 Since $\mathcal A$ does not contain a point $x$ such that $\sum_{(i,j) \in \mathrm{E}(D)} x_ix_j=d$, we have $d_\cA<d$ where $d$ is the density of $D$.
 In particular,
 \[\frac{|D|^n 2^{n^2 d_\cA}}{ 2^{\ell(\ell-1)\lfloor 
 \frac{n}{\ell}\rfloor^2}}\leq |D|^n 2^{n^2(d_A-d)} \to  0.\]
 Since by Fact~\ref{fact:one} we have $\mu_n(\cA)  \leq \frac{|D|^n 2^{n^2 d_\cA}}{2^{\ell(\ell-1)\lfloor \frac{n}{\ell}\rfloor^2}}$, we also get 
 $\mu_n(\cA) \to 0$. 
\end{proof}


Since ${\mathcal P}({\mathcal Z})$ is relatively compact (Corollary~\ref{cor:rel-compact}), 
the sequence $(\mu_n)_{n \in \mathbb N}$ has a weakly convergent subsequence $(\mu_{i_k})_{k \in \mathbb N}$ with a limit $\mu$.
Then $\mu$ must be \emph{inner regular} (it is in fact regular, see \cite[Theorem~1.1]{Billingsley}), i.e., 
for any measurable set $\mathcal X$, we have $$\mu(\mathcal X)=\sup\{ \mu(\cA) \colon {\cA \text{ is a measurable compact subset of } \mathcal X}\}.$$ 

In particular,
\begin{align*}
    \mu({\mathcal Z} \setminus \mathcal{O}) &= \sup\{ \mu({\mathcal A}) \colon {\cA \text{ is a compact subset of } {\mathcal Z} \setminus \mathcal{O}}\}
    \\ &= \sup \{  \underbrace{\lim_k \mu_{i_k}({\mathcal A})}_{=0 \text{ by Fact~\ref{fact:two}}} \colon {{\mathcal A} \text{ is a compact subset of } {\mathcal Z} \setminus \mathcal{O}}  \}  \\
    &=0.
\end{align*}

Since this is true for the limit $\mu$ of any subsequence of $(\mu_n)_{n \in \mathbb N}$, 
this implies that $\mu_n ({\mathcal Z} \setminus \mathcal{O}) \to 0$. 
\end{proof}

\begin{example}
\label{expl:combine}
We revisit the digraph $D$ from Example~\ref{expl:no-01}, which was the disjoint union of $\vec C_3$ and $T_3$. 
We still have to prove that the probability of the sentence $\phi$ which states that a given graph admits a homomorphism from $T_3$ asymptotically holds in $\Csp(D)$ with probability exactly $1/2$.  
By Theorem~\ref{thm:main}, asymptotically almost surely a $D$-coloured digraph $(G,f)$ is such that the image of $f$ induces a copy $S$ of $\vec C_3$ or 
a copy $S$ of $T_3$.
In both cases, a.a.s.\ the pre-images of the vertices of $S$ under $f$ have size at least $\lfloor n/3 \rfloor$. 
In the second case $G$ a.a.s.\ satisfies $\phi$, and 
in the first case, $G$ a.a.s.\ does not satisfy $\phi$, because if it would, then $\vec C_3$ must contain a subgraph of $T_3$, which it does not. 
Note that the number $c_n$ of digraphs with vertex set 
$\{1,\dots,n\}$ 
that map homomorphically to $T_3$ equals the number 
of digraphs 
that map homomorphically to $\vec C_3$, 
so both cases are 
equally likely.
\end{example}

\subsection{Blow-ups of $K_{\ell}$}
In the previous section we have seen that orientations of blow-ups of $K_{\ell}$ play a particular role when 
studying the almost-sure theory of $\Csp(D)$ for finite oriented graphs $D$. We will now investigate which
orientations $D$ of blow-ups of $K_{\ell}$ can arise 
as the dual of a finite set of forbidden orientations of trees.

Our arguments require some elements from \cite{LLT}. Let $A$ be a directed graph and $B$ a subgraph of $A$. We say that \emph{$A$ dismantles into $B$} if there is an enumeration $y_1,\ldots,y_r$ of $V(A) \backslash V(B)$ such that for every $i\leq r$ there is a homomorphism $\phi_i$ from the induced subgraph of $B$ with vertex set $B \cup \{y_i,\ldots,y_n\}$ to 
the induced subgraph of $B$ with vertex set
$B\cup \{y_{i+1}, \ldots,y_r\}$ which extends the identity map.

\begin{lemma}\label{Lem:noblowup}
Let  $D$ be the dual of a finite set of oriented trees. 
Assume that there are $x_1,x_2,x_3\in V(D)$ such that $x_1 \to x_2 \to x_3$ and $x_1 \to x_3$ and $D$ contains subgraphs $L_n$ and $L_m$ that are oriented cliques of size $n$ and $m$, respectively, such that for all $l_1\in L_n$, $l_2 \in L_m$, and $i \in \{1,2,3\}$ we have $l_1\to x_i\to l_2$. Suppose $L_n$ and $L_m$ are chosen so that $n+m$ is largest possible. 
Then there cannot be $x_4 \in V(D)$ such that $x_4 \to x_1$, $x_4 \to x_3$, $x_4 \nrightarrow x_2$ and $x_4 \to l$ for all $l\in L_n\cup L_m$.
\end{lemma}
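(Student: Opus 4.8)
The plan is to argue by contradiction: assuming such an $x_4$ exists, I would build a small auxiliary digraph that cannot admit a homomorphism to $D$ for reasons of maximality of $n+m$, and then exploit the tree‑duality $\Csp(D)=\Forb(\cF)$ from Theorem~\ref{thm:dual} to turn this into a contradiction with the duality. The one general fact I use throughout is that $D$ is irreflexive (a loop is a directed cycle, and $D$ has none by Proposition~\ref{prop:autos}), so that every homomorphism from a digraph into $D$ is injective on each set of vertices that induces a clique in the underlying undirected graph.

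First I would form the auxiliary digraph $H'$. Let $H$ be the subdigraph of $D$ induced on $L_n\cup L_m\cup\{x_1,x_2,x_3,x_4\}$, and obtain $H'$ from $H$ by adding a fresh vertex $z$ together with the edges $x_1\to z$, $z\to x_3$, $x_4\to z$, and $l\to z$ for all $l\in L_n$, and $z\to l$ for all $l\in L_m$. By construction $(x_1,z,x_3)$ satisfies $x_1\to z\to x_3$ and $x_1\to x_3$; the set $L_n\cup\{x_4\}$ induces an oriented clique of size $n+1$ (since $x_4\to l$ for all $l\in L_n$), each of its vertices has an edge to each of $x_1,z,x_3$, and each of $x_1,z,x_3$ has an edge to every vertex of $L_m$. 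Thus $H'$ realizes a configuration of exactly the shape considered in the lemma, but with cliques of sizes $n+1$ and $m$.

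Next I would show $H'\not\to D$. Given a homomorphism $h\colon H'\to D$, injectivity on the cliques $L_n\cup\{x_4\}$, $L_m$, $\{x_1,z,x_3\}$ forces $h(L_n\cup\{x_4\})$ and $h(L_m)$ to be oriented cliques of sizes $n+1$ and $m$ in $D$, and $h(x_1)\to h(z)\to h(x_3)$, $h(x_1)\to h(x_3)$; since $h$ preserves all the sandwich edges, these data form a configuration in $D$ of the lemma's shape whose two cliques have sizes summing to $(n+1)+m>n+m$, contradicting the choice of $L_n,L_m$ with $n+m$ largest possible. Hence $H'\not\to D$, and since $\Csp(D)=\Forb(\cF)$ there is some $F\in\cF$ with a homomorphism $g\colon F\to H'$. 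As $H=H'-z$ is an induced subdigraph of $D$, we have $H\in\Forb(\cF)$, so $F\not\to H$ and $g$ must use the vertex $z$.

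The last step, which I expect to be the main obstacle, is to reroute $g$ into a homomorphism $F\to H$, which contradicts $F\not\to H$. The structural facts that should make this possible are: $z$ has the same out-neighbourhood in $H'$ as $x_2$, namely $\{x_3\}\cup L_m$, and its in-neighbourhood $\{x_1,x_4\}\cup L_n$ exceeds that of $x_2$ only by $x_4$; moreover $x_4$ has no in-neighbour at all in $H'$ (by acyclicity of $D$, and since only out-edges of $x_4$ were added). Consequently, reassigning every vertex of $g^{-1}(z)$ to $x_2$ repairs every edge of $F$ except those $u\to v$ with $g(u)=x_4$ and $g(v)=z$, and each such $u$ is then a source of the tree $F$ whose pendant subtree lands in the out-neighbourhood $\{x_1,x_3\}\cup L_n\cup L_m$ of $x_4$. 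I would then re-embed these sources into $H$ — and, iterating, the leaves and sources of $F$ whose images lie inside a clique of $H'$ — using that $L_n$ and $L_m$ are transitive tournaments and therefore supply enough ``levels'' to absorb the bounded branching of the finite tree $F$; carrying this out by induction on $F$ should yield $F\to H$. Everything before this rerouting is routine bookkeeping; the heart of the argument is making this induction go through, and the two ingredients that drive the whole strategy are the maximality of $n+m$ (used in the second step) and the tree‑duality $\Csp(D)=\Forb(\cF)$ (used in the third).
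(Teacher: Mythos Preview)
Your strategy is quite different from the paper's. The paper does not build an auxiliary digraph or invoke the duality $\Csp(D)=\Forb(\cF)$ directly; instead it uses a structural characterisation of tree duals from~\cite{LLT}: the square $D^2$ dismantles to its diagonal $\triangle_{D^2}$. By analysing which of the specific pairs $(x_4,x_1)$ and $(l,x_2)$ for $l\in L_n$ is folded into the diagonal first, the paper extracts a vertex $x_5\in V(D)\setminus V(L)$ that is adjacent to everything in $L=L_n\cup L_m\cup\{x_1,x_2,x_3\}$, and this enlarged clique contradicts the maximality of $n+m$ directly.

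Your steps 1--3 are essentially fine: $H'$ is well-defined, the acyclicity of $D$ makes $L_n\cup\{x_4,x_1,z,x_3\}$ and $\{x_1,z,x_3\}\cup L_m$ genuine tournaments so that any $h\colon H'\to D$ produces a configuration of strictly larger clique-sum, and the passage to a tree $F\in\cF$ with $F\to H'$ and $F\not\to H$ is immediate.

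The genuine gap is step 4, and you have correctly flagged it as the main obstacle. What you are trying to prove there is that every oriented tree $T$ with $T\to H'$ also has $T\to H$; but combined with $H\subseteq D$ and $\Csp(D)=\Forb(\cF)$, this immediately yields $H'\in\Forb(\cF)$, i.e.\ $H'\to D$, which is exactly what step~2 refutes. So steps~2 and~4 are the two halves of the desired contradiction, and \emph{all} of the content of the lemma lies in step~4. Your rerouting sketch, however, uses nothing beyond the explicit edges of $H,H'$ and the fact that $F$ is a tree; in particular it does not use maximality. If it were valid it would prove a purely combinatorial statement about any pair $(H,H')$ of this shape. But the vertex $z$ strictly dominates $x_2$ in $H'$ (same out-neighbourhood, strictly larger in-neighbourhood), so a priori $H'$ accepts more trees than $H$, not fewer. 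Concretely: when you remap a source $u\in g^{-1}(x_4)$ into $L_n$, you may break the edges from $u$ to those out-neighbours that $g$ already placed in $L_n$ (or in $L_m$, where the edges from $L_n$ are unspecified), forcing further remappings; the number of available ``levels'' in $L_n$ is the fixed integer $n$, whereas nothing in your outline bounds the depth of this cascade in terms of $n$ or $m$. The appeal to ``transitive tournaments supply enough levels'' is not an argument.

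In short, you have located the right tension but have not supplied the hard half. The paper's dismantlement tool is precisely the device that closes this gap, and it does so inside $D$ itself rather than by chasing an unknown tree through an auxiliary graph.
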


\begin{proof}
Let us assume $A$ dismantles to $B$ and that $y_1,\ldots,y_r$ and $(\phi_1,\ldots,\phi_r)$ are witnesses of this dismantlement. In this case, say that $y_i$ is ``sent to" $\phi_i(y_i)$ and, since vertices of $A\setminus B$ are sent in an one by one, we say that $y_i$ is ``sent before" $y_j$ when $j>i$.  Remark that, under those assumptions, up to modifying  $(\phi_i)_{i\leq r}$, we can always assume that $y_i$ is sent to a vertex in $B$.  Indeed, for $i\leq r$ set $\psi(y_i)=\phi_r(\phi_{r-1}(\ldots \phi_i(y_i))\in B$, we can replace $\phi_i$ by the extension of the identity
on $B \cup \{y_{i+1},\dots,y_{n}\}$
which sends $y_i$ to $\psi(y_i)$ and we still get a dismantlement from $A$ to $B$. 
Let us now state the result from~\cite{LLT} that we need: if $D$ is the dual of a finite set of oriented trees, then $D^2$ dismantles into $\triangle_{D^2}$, where $\triangle_{D^2}$ denotes the subgraph of $D^2$ induced on the diagonal $\{(a,a) \mid a \in V(D)\}$. This follows from the proof of Theorem 5.7 in~\cite{LLT}.

Let $L$ be the subgraph of $D$ induced on $L_m \cup L_n \cup \{x_1,x_2,x_3\}$. 
We assume for contradiction that there exists $x_4 \in V(D)$ as in the statement of the lemma, and use the fact that $D^2$ dismantles into $\triangle_{D^2}$. In particular, we need to send $(x_4,x_1)$ and $(l,x_2)$ for each $l\in L$ to some vertex in $\triangle_{D^2}$.  We argue that the first of those vertices that is sent to $\triangle_{D^2}$ cannot be sent to a vertex of $\triangle_{L^2}$.
Indeed, if $(x_4,x_1)$ is sent to a vertex of $\triangle_{L^2}$ before $(l,x_2)$ for every $l\in L$, it needs to be sent to $(u,u)\in \triangle_{L^2}$ such that $(u,u)\to (x_1,x_2) $, so $u\to x_1$, meaning that $u\in L_n$. Since we also have $(x_4,x_1)\to (u,x_2)$ and $(u,x_2)$ has not been sent to $\triangle_{L^2}$, this would imply $(u,u)\to (u,x_2)$ which is impossible.
On the other hand,
if some $l\in L_n$ is such that $(l,x_2)$ is sent to $(u,u)\in \triangle_{L^2}$ first, then we would have $(x_4,x_1) \to (u,u)$ and thus $x_1\to u$ and $x_4\to u$, implying that $u\in L\setminus (L_n\cup \{x_2\})$. In this case $(l,x_2)\to (u,u)$ which is impossible.
Therefore, whichever vertex of $\{(x_4,x_1) \} \cup  \{(l,x_2) \mid l\in L_n\}$ is sent first, it must be sent to $(x_5,x_5)$ for some $x_5 \in D\backslash V(L)$. 

We have two cases: either $(x_4,x_1)$ is sent first, or  $(l,x_2)$ for some $l \in L_n$ is sent first. 
If $(x_4,x_1)$ is sent first, then since $(x_4,x_1)\to (l,x_2)$ for all $l\in V(L)$, necessarily $x_5\to l$, therefore $V(L)\cup\{x_5\}$ forms an oriented clique that is larger than $L$. If there is $l$ such that $(l,x_2)$ is sent first, then $(x_4,k)\to(x_5,x_5)$ for $k\in L_n \cup \{x_1\}$ and $x_5\to k'$ for $k'\in L_{m}$. Therefore, $V(L) \cup \{x_5\}$ forms an oriented clique than is larger than $L$. In both cases we have reached a contradiction to the assumption that $n+m$ is largest possible.
\end{proof}

\begin{lemma}\label{Lem:noblowup2}
Let  $D$ be the dual of a finite set of oriented trees. 
Assume that there are $x_1,x_2,x_3\in V(D)$ such that $x_1 \to x_2 \to x_3$ and $x_1 \to x_3$ and $D$ has a subgraph $L_n$ that is an oriented clique of size $n$ such that for all $l \in L_n$ and $i \in \{1,2,3\}$ we have $l \to x_i$. Choose $L_n$ so that 
$n$ is maximal.
Then there cannot be $x_4 \in V(D)$ such that $x_4 \to x_1$, $x_4 \to x_2$, $x_4 \nrightarrow x_3$ and $x_4\to l$ for all $l\in V(L_n)$.
\end{lemma}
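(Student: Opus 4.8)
The plan is to mimic the dismantlability argument of the previous lemma, Lemma~\ref{Lem:noblowup}, specialising it to the case of a single oriented clique $L_n$ (rather than two cliques $L_n,L_m$) and to the new configuration, where $x_4$ dominates $x_1$ and $x_2$ but not $x_3$. As in that proof I would invoke the result extracted from~\cite{LLT}: since $D$ is the dual of a finite set of oriented trees, $D^2$ dismantles into the diagonal $\triangle_{D^2}$, and one may assume each off-diagonal vertex is sent in one step directly to a diagonal vertex. Let $L$ be the subgraph of $D$ induced on $V(L_n)\cup\{x_1,x_2,x_3\}$; note that $L$ is \emph{not} itself an oriented clique because $x_4\nrightarrow x_3$ has no bearing here, but $x_1\to x_2\to x_3$, $x_1\to x_3$, and $l\to x_i$ for all $l\in V(L_n)$, $i\in\{1,2,3\}$, so the only non-adjacency inside $L$ is between $x_1,x_2$ and $x_3$—wait, $x_1\to x_3$ and $x_2\to x_3$ do hold, so actually $L$ \emph{is} an oriented clique of size $n+3$ unless $L_n$ already contains such vertices; in any case I would simply track which pairs in $\{(x_4,x_1)\}\cup\{(l,x_2)\mid l\in V(L_n)\}$ are sent first to the diagonal, exactly as in Lemma~\ref{Lem:noblowup}.

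The core of the argument: assume for contradiction that $x_4$ as in the statement exists. Consider the set $S:=\{(x_4,x_1)\}\cup\{(l,x_2)\mid l\in V(L_n)\}$ of vertices of $D^2$. Each of these must eventually be sent to the diagonal. I claim that whichever element of $S$ is sent first cannot be sent to a vertex of $\triangle_{L^2}$ (the diagonal over $V(L)$), by the same case analysis: if $(x_4,x_1)$ goes first to $(u,u)\in\triangle_{L^2}$ then $(u,u)\to(x_1,x_2)$ forces $u\to x_1$, hence $u\in V(L_n)$, and then $(x_4,x_1)\to(u,x_2)$ with $(u,x_2)\in S$ not yet sent gives $(u,u)\to(u,x_2)$, impossible; symmetrically, if some $(l,x_2)$, $l\in V(L_n)$, goes first to $(u,u)\in\triangle_{L^2}$ then $(x_4,x_1)\to(u,u)$ forces $x_1\to u$ and $x_4\to u$, so $u$ lies in $V(L)$ and is dominated by $x_4$; since $x_4\to l'$ for all $l'\in V(L_n)$ and $x_4\to x_1,x_2$ but $x_4\nrightarrow x_3$, we get $u\notin\{x_3\}$, and then $(l,x_2)\to(u,u)$ is impossible because $x_2\nrightarrow x_2$. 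Hence the first element of $S$ to reach the diagonal is sent to $(x_5,x_5)$ with $x_5\in V(D)\setminus V(L)$.

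Finally I would derive the contradiction by showing $V(L)\cup\{x_5\}$ is again an oriented clique, contradicting maximality of $n$. Split into two cases. If $(x_4,x_1)$ is sent first to $(x_5,x_5)$: since $(x_4,x_1)\to(l,x_2)$ for every $l\in V(L)$ (using $x_4\to x_2$, $x_4\to l$ for $l\in V(L_n)$, $x_4\to x_1$, and $x_1\to x_2,x_3$ — wait, I need $x_1\to$ the second coordinate; the relevant arrows are into the not-yet-sent vertices, so I use $(x_4,x_1)\to(l,x_2)$, valid as $x_4\to l$ and $x_1\to x_2$, and $(x_4,x_1)\to(x_3,x_2)$? no—I instead argue that every vertex of $L$ is adjacent to $x_5$ because the homomorphism extending the identity sends $(x_4,x_1)$ to $(x_5,x_5)$ and preserves all edges to the already-identity-fixed vertices of $V(L)$), so $x_5\to l$ for each $l\in V(L)$, giving an oriented clique of size $n+4$ inside $V(L)\cup\{x_5\}$ that contains an oriented $K_{n+1}$ dominating $x_1,x_2,x_3$, contradiction. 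If instead some $(l,x_2)$ is sent first to $(x_5,x_5)$: then for each $k\in V(L_n)\cup\{x_1\}$ we have $(x_4,k)\to$ something and for $k'$ with $x_5\to k'$, the edges of $D^2$ out of and into $(l,x_2)$ force $x_5$ to be adjacent (in the appropriate direction) to every vertex of $V(L)$, again producing an oriented clique larger than $L_n$ dominating $x_1,x_2,x_3$ — contradiction with maximality of $n$.

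The main obstacle I anticipate is bookkeeping the edge directions of $D^2$ in these last two cases: one must check carefully, for each vertex of $V(L)$, whether the edge of $D^2$ runs into or out of $(x_4,x_1)$ (resp.\ $(l,x_2)$), since the homomorphism only lets us conclude $x_5$ is adjacent to $V(L)$ in the matching direction, and then verify that the resulting adjacencies really do extend $L_n$ to a larger oriented clique all of whose vertices still dominate $x_1,x_2,x_3$. This is exactly the place where the hypothesis $x_4\nrightarrow x_3$ (as opposed to $x_4\nrightarrow x_2$ in Lemma~\ref{Lem:noblowup}) changes which intermediate vertices $u$ are excluded, so the case analysis must be redone rather than quoted verbatim; everything else is parallel to the proof of Lemma~\ref{Lem:noblowup}.
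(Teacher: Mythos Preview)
Your proposal has a genuine gap: the tracked set is wrong for this configuration. You watch
$S=\{(x_4,x_1)\}\cup\{(l,x_2)\mid l\in V(L_n)\}$, which is the set from Lemma~\ref{Lem:noblowup}. But here the forbidden edge has moved from $x_4\nrightarrow x_2$ to $x_4\nrightarrow x_3$, and with your $S$ the second case of the ``cannot land in $\triangle_{L^2}$'' analysis breaks down. If some $(l,x_2)$ with $l\in V(L_n)$ is sent first to $(u,u)\in\triangle_{L^2}$, the only tracked edge available is $(x_4,x_1)\to(l,x_2)$, which gives $x_4\to u$ and $x_1\to u$; inside $V(L)$ this forces $u=x_2$. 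That is \emph{not} a contradiction: the edge $(x_4,x_1)\to(x_2,x_2)$ does hold (since $x_4\to x_2$ and $x_1\to x_2$), and there are no edges among the $(l',x_2)$ because those would require $x_2\to x_2$. Your sentence ``$(l,x_2)\to(u,u)$ is impossible because $x_2\nrightarrow x_2$'' conflates ``sent to'' with ``has an edge to''; nothing in $D^2$ forces $x_2\to x_2$ here, so the dismantling can perfectly well send $(l,x_2)$ to $(x_2,x_2)$ and your argument stalls.

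The paper repairs this by shifting the tracked set one step: it watches $(x_4,x_2)$ together with $(l,x_3)$ for $l\in V(L_n)\cup\{x_1,x_2\}$. Now if some $(l,x_3)$ is sent first to $(u,u)\in\triangle_{L^2}$, the edge $(x_4,x_2)\to(l,x_3)$ yields $x_4\to u$ and $x_2\to u$; in $V(L)$ the only $u$ with $x_2\to u$ is $x_3$, and the hypothesis $x_4\nrightarrow x_3$ eliminates it. This is exactly where the new non-edge does its work. Your final paragraph correctly anticipates that ``the case analysis must be redone rather than quoted verbatim'', but the tracked set itself must change, not only the bookkeeping. The same shift is what makes the closing ``larger clique'' step go through: from $(x_4,x_2)\to(l,x_3)$ for all $l\in V(L_n)\cup\{x_1,x_2\}$ one gets $x_5\to l$ for all such $l$ and $x_5\to x_3$, so $V(L_n)\cup\{x_5\}$ is an oriented $(n{+}1)$-clique still dominating $x_1,x_2,x_3$. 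With your $S$ you would only obtain $x_5\to l$ for $l\in V(L_n)$ and $x_5\to x_2$, which does not contradict the maximality of $n$.
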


\begin{proof}
Assume for contradiction that there exists $x_4 \in V(D)$ as in the lemma. 
Let $L$ be the subgraph of $D$ induced on $L_n \cup \{x_1,x_2,x_3\}$. 
We use the fact that $D^2$ dismantles into $\triangle_{D^2}$ as in the proof of the previous lemma. In particular, we need to send $(x_4,x_2)$ and $(l,x_3)$ for each $l\in L_n \cup \{x_1,x_2\}$ somewhere. Looking at which one needs to be sent first, we can argue that they cannot be sent to a vertex from $\triangle_{L^2}$. Indeed, if $(x_4,x_2)$ is sent to a vertex from $\triangle_{L^2}$ before $(l,x_3)$ for each $l\in L_n \cup \{x_1,x_2\}$, then it needs to be sent to $(u,u)\in \triangle_{L^2}$ such that $(u,u)\to (x_1,x_3)$, so $u\to x_1$, meaning that $u\in V(L_n)$. Since we also have $(x_4,x_2)\to (u,x_3)$ and $(u,x_3)$ has not been sent to the diagonal, this is impossible. But if for some $l\in V(L_n)$ 
the vertex $(l,x_3)$ 
is sent to $(u,u)\in \triangle_{L^2}$ first, then we would have $(x_4,x_2)\to (u,u)$. The only possible $u$ is $x_3$, however if that is the case, $(l,x_3)$ can not be sent to $(u,u)$, we have a contradiction. 

Therefore, there must be $x_5,x^l_6 \in D\backslash V(L_{n+3})$ such that $(x_4,x_1)$ is sent to $(x_5,x_5)$ and $(x_1,x_2)$ is sent to $(x^l_6,x^l_6)$. We have two cases: either $(x_4,x_2)$ is sent first, or one of the $(l,x_3)$ is sent first. 
If $(x_4,x_2)$ is sent first, then since $(x_4,x_2)\to (l,x_2)$ for all $l\in V(L_{n})$, necessarily $x_5\to l$, Therefore, $V(L) \cup \{x_5\}$ forms a larger oriented clique than $L$. 
If there is $l$ such that $(l,x_2)$ is sent first, then  $(x_4,k)\to(x_6^l,x_6^l)$ for $k\in L_n\cup \{x_1\}$. Therefore, $V(L) \cup \{x_6^l\}$ forms a larger oriented clique than $L$. Both cases are in contradiction to the maximal choice of $n$. 
\end{proof}

\begin{lemma}\label{lem:blowup}
Let $D$ be the dual of a finite set of oriented trees
and let $\ell$ be maximal such that 
$D$ contains the orientation $S$ of a blow-up of $K_\ell$. 
Then $S$ is isomorphic to $T_k$.
\end{lemma} 
\begin{proof}
Assume for contradiction that $S$ is a proper blow-up of $K_\ell$, meaning it strictly contains a copy of $K_{\ell}$, which we denote by $Q$. In particular, there are $u\in V(Q)$ and $v\in V(S) \setminus V(Q)$ such that $u$ and $v$ have the same edges and non-edges with the other vertices of $S$. This, by the maximality of $k$, contradicts either Lemma \ref{Lem:noblowup} or Lemma \ref{Lem:noblowup2}, respectively, depending on whether $u$ both sends and receives edges from $Q$ or whether it only receives edges from the other vertices of $Q$. Therefore, $S$ can only be an orientation of $K_{\ell}$ with no vertices blown-up. Hence, by the acyclicity of $D$ (Proposition~\ref{prop:autos}), we obtain that $S$ is isomorphic to $T_k$. 
\end{proof}


\subsection{Model theory of $T_{\ell}$-coloured digraphs}
\label{sect:D-col}
In the particular case that 
$D$ equals $T_{\ell}$, 
we can use the results from 
the previous section to fully determine the almost-sure theory of $D$-coloured digraphs. 
In fact, asymptotically almost surely the graphs from $\Csp(T_{\ell})$ have exactly one homomorphism to $T_{\ell}$, and thus we can also determine the almost-sure theory of $\Csp(T_{\ell})$. 
This can then be combined with the results in the previous section to determine the almost-sure theory for \emph{all} duals $D$ of finite sets of oriented trees.

It will be convenient to treat $D$-coloured digraphs as relational structures.

\begin{definition}
Let $D$ be a finite digraph. 
If $(G,f)$ is a $D$-coloured digraph, 
then we denote by $G_f$ the relational structure
which consists of the digraph $G$
equipped with unary predicates $P_u := f^{-1}(u)$ for each $u \in V(D)$.
\end{definition}

\begin{lemma}\label{lem:AP}
For every finite digraph $D$ there exists an (up to isomorphism unique) countable digraph $C(D)$ with a homomorphism $g \colon C(D) \to D$ such that 
the structure $C(D)_g$ is homogeneous and the age of $C(D)_g$ consists of all structures of the form $G_f$ for 
a finite $D$-coloured digraph $(G,f)$.
\end{lemma}
\begin{proof}
Note that the class of all structures of the form $G_f$ for some finite $D$-coloured digraph $(G,f)$ is an amalgamation class. The statement therefore follows from Fra{\"i}ss\'e's theorem (Theorem~\ref{thm:fraisse}). 
\end{proof}

\begin{lemma}\label{lem:graph-reduct}
Suppose that $D$ is a finite digraph without non-trivial automorphisms. 
Then $C(D)$ is isomorphic to the structure $U(D)$ introduced in Lemma~\ref{lem:mc}. 
\end{lemma}
\begin{proof}
A finite digraph embeds into $C(D)$ if and only if it maps homomorphically to $D$ (by Lemma~\ref{lem:AP}) if
and only if it embeds into $U_D$. Thus, the theory $T$ of $C(D)$ is a companion of the theory of $U(D)$, since 
structures with the same age satisfy the same universal consequences.
We claim that $C(D)$ is model complete. 
By Theorem~\ref{thm:mc-crit} it suffices to verify that
each of the unary predicates $P_v$ for $v \in V(D)$ 
has an existential and a universal definition in $C(D)$. Let $\phi_v$ be a primitive positive formula that defines the orbit of $v$ in $D$ in $\Aut(D)$, which
exists by Lemma~\ref{lem:orb-pp}. 
Since $D$ has no non-trivial automorphisms, 
the only element that satisfies $\phi_v$ in $D$ is the vertex $v$. The expansion of $D$ in the signature of $C(D)_g$ where $P_u$ denotes $\{u\}$, for each $u \in V(D)$, has an embedding $e$ into $C(D)_g$ by the definition of $C(D)_g$. 
Note that $e(v)$ satisfies $\phi_v$ in $C(D)$. 
Moreover, every vertex of $C(D)$ in $P_v$ satisfies
$\phi_v$ by the homogeneity of $C(D)_g$. 
Vertices of $C(D)$ outside $P_v$, however, 
do not satisfy $\phi_v$. To see this, note that  
every element $x$ of $C(D)$ must be in
$P_u$ for some $u \in V(D)$, which in turn
implies by the homogeneity of $C(D)_g$ that $x$ satisfies $\phi_u$ in $D$. But if $x$ also satisfies $\phi_v$, then this implies that $u=v$. So indeed, $P_v$ has the existential (even primitive positive) definition $\phi_v$. 
The negation of $P_v$ then has the existential definition $\bigvee_{u \neq v} \phi_u$. 
Since the model companion is unique up to equivalence of theories, and both $C(D)$ and $U(D)$ are model-complete and companions of each other, this implies 
that $C(D)$ and $U(D)$ have the same first-order theory. 
Since $C(D)_g$ is homogeneous in a finite relational signature and therefore $\omega$-categorical, so is 
its reduct $C(D)$; this implies
that $C(D)$ and $U(D)$ are even isomorphic.
\end{proof}

\begin{proposition}\label{prop:Tk}
Let $\ell \geq 1$. 
The almost-sure theory of the class 
of all structures $G_f$ for $T_\ell$-coloured digraphs $(G,f)$  equals the theory of $C(T_\ell)_g$. 
\end{proposition}

\begin{proof}
To prove the statement of the lemma,it suffices to check that 
every extension axiom that holds in $U(T_\ell)$ holds asymptotically almost surely in $G_f$ for random $T_\ell$-coloured digraphs $(G,f)$  (Lemma~\ref{lem:ext-ax}). 
Let 
$\phi$ be such an extension axiom of the form $\forall x_1,\dots,x_m \exists y.\, \psi(x_1,\dots,x_m,y)$,
and let $(G,f)$ be a random $T_\ell$-coloured digraph with vertices $\{1,\dots,n\}$ drawn from the uniform distribution. 
Let $a_1,\dots,a_m$ be elements of $G$.
Then the probabilities that a given element $b \in V(G) \setminus \{a_1,\dots,a_m\}$ satisfies 
$\psi(a_1,\dots,a_m,b)$ are independent and larger than some positive constant, because $\phi$ holds in $U(T_\ell)$.
The probability that such an element $b$
exists tends to $1$ exponentially fast if $|V(G)|$ tends to infinity, because $|f^{-1}(u)| \geq \lfloor n/\ell \rfloor$ for every $u \in V(T_\ell)$ by Theorem~\ref{thm:main}. 
We may then use the union bound to derive
that $G_f$ satisfies $\phi$ asymptotically almost surely.  
\end{proof}

\begin{corollary}\label{cor:Tk}
Let $\ell \geq 1$. 
The almost-sure theory of 
$\Csp(T_\ell)$ 
equals the theory of $C(T_\ell)$ (and, by Lemma~\ref{lem:graph-reduct}, of $U(T_{\ell})$). 
\end{corollary}
\begin{proof}
  In the distribution on $\Csp(T_\ell)$ obtained from the uniform distribution on $T_\ell$-coloured digraphs by forgetting the colouring, the graphs that have more than one homomorphism to 
    $T_\ell$ are more likely than in the uniform distribution on $\Csp(T_\ell)$. However,     
    note that if $(G,f)$ is a $T_\ell$-coloured digraph, 
    and $u \in V(G)$, 
    then $G_f$ asymptotically almost surely satisfies 
    $$\exists u_1,\dots,u_{k-1},u_{k+1},\dots,u_k (E(u_1,u_2) \wedge \cdots \wedge E(u_{k-1},u) \wedge E(u,u_{k+1}) \wedge \cdots \wedge E(u_{\ell-1},u_\ell))$$
    for some $k \in \{1,\dots,\ell\}$. 
    Hence, asymptotically almost surely a $T_{\ell}$-coloured digraph $(G,f)$ is such that $G$ has exactly one homomorphism to $T_{\ell}$. This implies the statement.
\end{proof}

\begin{remark}\label{rem:concentation}
    With the same techniques as used in the proof of Theorem~\ref{thm:main} and Corollary~\ref{cor:Tk} one can show Theorem~\ref{thm:graphs}. (If $\ell$ is the size of the largest clique in an undirected graph $H$, then asymptotically almost surely an undirected graph with a homomorphism to $H$ has exactly $\ell!$ many homomorphisms to $H$.)
\end{remark}

\begin{theorem}\label{thm:csps}
Let $D$ be the dual of a finite set of oriented trees.
Then there exists an $\ell \in {\mathbb N}$ such that the almost-sure theory of $\Csp(D)$ equals
the almost-sure theory of 
$\Csp(T_\ell)$. 
\end{theorem}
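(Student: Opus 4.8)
The plan is to take $\ell$ to be the largest integer such that $D$ contains a subgraph isomorphic to an orientation of $K_\ell$, and to show that the almost-sure theory of $\Csp(D)$ coincides with that of $\Csp(T_\ell)$, which is then described by Proposition~\ref{prop:T}. We may assume $\ell \ge 2$: if $\ell = 1$ then $D$ has no edges and $\Csp(D) = \Csp(T_1)$ outright. Since $D$ has no directed cycles (Proposition~\ref{prop:autos}), every orientation of $K_\ell$ inside $D$ is in fact the transitive tournament $T_\ell$, and it is an induced subgraph of $D$ (a reverse arc would create a $2$-cycle). Let $T^{(1)},\dots,T^{(r)}$ be these induced copies of $T_\ell$ in $D$; there are finitely many. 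In particular $T_\ell$ embeds into $D$, so $\Csp(T_\ell) \subseteq \Csp(D)$; and a digraph $G$ has a homomorphism to $D$ whose image lies inside some $T^{(i)}$ if and only if $G \to T_\ell$, because $D[V(T^{(i)})] = T^{(i)} \cong T_\ell$. Hence $\Csp(D)_n \setminus \Csp(T_\ell)_n$ consists exactly of those $G \in \Csp(D)_n$ all of whose homomorphisms to $D$ have image outside every $T^{(i)}$.

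The crux is the counting estimate $|\Csp(D)_n| = (1+o(1))\,|\Csp(T_\ell)_n|$. I would first bound $E_n := |\Csp(D)_n \setminus \Csp(T_\ell)_n|$ by the number $b_n$ of ``bad'' labelled $D$-coloured digraphs, i.e.\ those whose homomorphism to $D$ has image not contained in a blow-up of an orientation of $K_\ell$: choosing one homomorphism per $G \in \Csp(D)_n \setminus \Csp(T_\ell)_n$ yields an injection into the bad $D$-coloured digraphs, using that by Lemma~\ref{lem:blowup} the subgraphs of $D$ that are blow-ups of orientations of $K_\ell$ are exactly the $T^{(i)}$. By Theorem~\ref{thm:main}, $b_n = o(c_n)$, where $c_n$ is the total number of labelled $D$-coloured digraphs on $\{1,\dots,n\}$. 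Again by Lemma~\ref{lem:blowup} (the densest subgraphs of $D$ are the copies of $T_\ell$), all but a $o(1)$-fraction of $D$-coloured digraphs have all of their colours inside some $T^{(i)}$; since the number of $D$-coloured digraphs with colours inside a fixed $T^{(i)}$ equals the number $c_n'$ of $T_\ell$-coloured digraphs, we get $c_n \le r\,c_n'(1+o(1))$, hence $b_n = o(c_n')$.

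It remains to compare $c_n'$ with $|\Csp(T_\ell)_n|$. Here I would use that $C_{T_\ell}$ satisfies the $\{E\}$-sentence $\psi = \forall x\,(\phi_1(x) \vee \cdots \vee \phi_\ell(x))$ from the proof of Lemma~\ref{lem:one-homo}, where $\phi_i$ is a primitive positive definition of the (singleton) orbit of the $i$-th vertex of $T_\ell$; by Corollary~\ref{cor:butterfly}, $\psi$ then lies in the almost-sure theory of $T_\ell$-coloured digraphs, i.e.\ a uniformly random $T_\ell$-coloured digraph satisfies $\psi$ asymptotically almost surely. As in the proof of Lemma~\ref{lem:one-homo}, any digraph satisfying $\psi$ has a unique homomorphism to $T_\ell$; hence all but a $o(1)$-fraction of $T_\ell$-coloured digraphs $(G,c)$ have $c$ as their only colouring, so $c_n' = (1+o(1))\,|\{G \in \Csp(T_\ell)_n : G \text{ is uniquely colourable}\}| = (1+o(1))\,|\Csp(T_\ell)_n|$, the last step by Corollary~\ref{cor:T}. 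Combining, $E_n \le b_n = o(c_n') = o(|\Csp(T_\ell)_n|)$, whence $|\Csp(D)_n| = |\Csp(T_\ell)_n| + E_n = (1+o(1))\,|\Csp(T_\ell)_n|$. Because the discarded set $E_n$ is negligible, for every first-order sentence $\phi$ one has $\phi_n^{\Csp(D)} = \phi_n^{\Csp(T_\ell)} + o(1)$, and since $\Csp(T_\ell)$ has first-order convergence (in fact a $0$-$1$ law) by Proposition~\ref{prop:T}, so does $\Csp(D)$, with the same almost-sure theory.

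The step I expect to be the main obstacle is the middle one: one must make the bad $D$-coloured digraphs negligible \emph{relative to $|\Csp(T_\ell)_n|$}, not merely relative to the number of $D$-coloured digraphs. This is what forces the two uses of Lemma~\ref{lem:blowup} — to know that $c_n$ and $c_n'$ differ only by the bounded factor $r$, and that the bad digraphs of Theorem~\ref{thm:main} are precisely the ones failing to map into a copy of $T_\ell$ — together with the passage from the coloured count $c_n'$ to the uncoloured count $|\Csp(T_\ell)_n|$, which costs only a $(1+o(1))$ factor because $T_\ell$-coloured digraphs are asymptotically uniquely colourable, a fact supplied directly by the $0$-$1$ law of Corollary~\ref{cor:butterfly} rather than requiring any new quantitative estimate beyond those in Theorem~\ref{thm:main}.
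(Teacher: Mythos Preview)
Your proposal is correct and follows essentially the same approach as the paper: both arguments combine Theorem~\ref{thm:main}, Lemma~\ref{lem:blowup}, and Corollary~\ref{cor:T} (with Corollary~\ref{cor:butterfly} behind it) to reduce $\Csp(D)$ to $\Csp(T_\ell)$. The paper's own proof is a three-sentence sketch, whereas you spell out in full the counting argument that passes from $D$-coloured digraphs to uncoloured ones (the bounds $E_n \le b_n = o(c_n) = o(c_n') = o(|\Csp(T_\ell)_n|)$); one minor remark is that what you invoke as ``Lemma~\ref{lem:blowup}'' for the claim that every blow-up of $K_\ell$ in $D$ is already a $T_\ell$ is really extracted from the \emph{proof} of that lemma (via Lemmas~\ref{Lem:noblowup} and~\ref{Lem:noblowup2}) rather than its statement.
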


\begin{proof}
Theorem~\ref{thm:main} implies that
asymptotically almost surely a random $D$-coloured graph is such that all colours come from a copy $H$ of a blow-up of an  orientation of $K_{\ell}$, for some $\ell \in {\mathbb N}$. By Lemma~\ref{lem:blowup}, $H$ is in fact isomorphic to $T_\ell$.
Now the same argument as in Corollary~\ref{cor:Tk} shows that
a graph from 
$\Csp(D)$ asymptotically almost surely has exactly one homomorphism to $T_{\ell}$,
and that the almost sure theory of $\Csp(D)$ equals the almost sure theory of $\Csp(T_\ell)$.
\end{proof}

\subsection{Proofs of the main results} 
We can finally prove Theorem~\ref{thm:trees}:
if $\mathcal F$ is a finite set of orientations of finite trees, the almost-sure theory of
$\Forb({\mathcal F})$ is $\omega$-categorical. 

\begin{proof}[Proof of Theorem~\ref{thm:trees}]
Let ${\mathcal F}$ be a finite set of  orientations of finite trees. 
By Theorem~\ref{thm:dual} there is a finite digraph $D$
such that $\Forb(\cF) = \Csp(D)$. 
Theorem~\ref{thm:csps} implies that 
the almost-sure theory $T$ of $\Csp(D)$ equals
the almost-sure theory of $\Csp(T_{\ell})$ for some $\ell \geq 1$. 
By Corollary~\ref{cor:Tk}, the theory 
$T$ equals the theory of $U(T_\ell)$,  
and $U(T_{\ell})$ is $\omega$-categorical by Lemma~\ref{lem:mc}.
\end{proof}

As we mentioned in the introduction, we believe that our results about 
classes of the form $\Csp(D)$ for finite digraphs $D$ are of independent interest. As demonstrated in Example~\ref{expl:combine}, these classes do not always have a first-order 0-1 law. However, our proofs show that their asymptotic behaviour is composed linearly from finitely many $\omega$-categorical first-order theories in a sense that we introduce now. 

A class ${\mathcal C}$ of finite digraphs has a \emph{finite linear combination of 
0-1 laws} if there
exists $k \in {\mathbb N}$ and classes  $\mathcal{D}_1,\ldots,\mathcal{D}_k$ with first-order 0-1 laws and $\lambda_1, \ldots, \lambda_k\in [0,1]$ such that $\sum_i \lambda_i=1$ and for all first-order $\tau$-sentences $\phi$ 
\[ \phi_\omega^{\mathcal{C}} = \sum_{i=1}^k \lambda_i \phi_\omega^{\mathcal{D}_i}. \]

The following result can be shown by adapting the proof of Theorem~\ref{thm:main} to the setting where we allow double edges, and 
by performing our arguments for the densest subgraphs. The main argument is that we are able to define a similar notion of density, where double edges are counted twice, and use it in the same way. It is to be noted that a double edge has higher density that any
orientation of $K_\ell$. The results stating that $\Csp(D)$ concentrates on its densest parts remains true and so we obtain the following. 

\begin{theorem}\label{thm:convergence}
For every finite digraph $D$, the class $\Csp(D)$
has a finite linear combination of 0-1 laws. 
\end{theorem}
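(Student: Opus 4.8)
The plan is to follow the strategy the paper has already set up for duals of oriented trees, but now applied to an arbitrary finite digraph $D$, where we must allow for the possibility of double edges (pairs $x,y$ with both $(x,y)$ and $(y,x)$ in $E(D)$) in the densest parts. First I would introduce the appropriate notion of density: for a digraph $D$ and a probability distribution $\delta\colon V(D)\to[0,1]$, consider $\sum_{(u,v)\in E(D)} \delta(u)\delta(v)$, which now counts a double edge $\{u,v\}$ with weight $2\delta(u)\delta(v)$. Maximising this over all $\delta$ gives the density $d$; note that a single double edge already gives density $1/2 > (k-1)/(2k)$ for every $k$, so the densest subgraphs of $D$ are controlled by how many vertices can be pairwise joined by double edges together with how the remaining (oriented) edges contribute. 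The analogue of Lemma~\ref{lem:blow-up} should still hold: a densest subgraph is a blow-up of a structure built from a symmetric clique part (double edges) and an oriented part, with the $\delta$-mass uniformly spread in a prescribed way.

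Next I would replay the counting argument of Theorem~\ref{thm:main} in this setting. The number of labelled $D$-coloured digraphs is $c_n = \sum_{s_1+\cdots+s_k=n}\binom{n}{s_1,\dots,s_k}\prod_{(v_i,v_j)\in E(D)} 2^{s_i s_j}$, and a double edge $\{v_i,v_j\}$ contributes a factor $2^{2 s_i s_j}$, exactly matching the doubled weight in the density. Defining $d_n$ as the rescaled maximum of the relevant quadratic form over integer partitions, one gets $c_n \geq 2^{n^2 d_n}$ and $d_n\to d$. The key inequality — that any partition whose support is not a densest subgraph loses at least a $2^{n^2(1/n - 1/n^2)}$ factor — is proved by the same perturbation argument as in Theorem~\ref{thm:main}, handling in turn the case where the support has no densest substructure (strict density gap) and the case where it contains one but is strictly larger (the $\alpha_i$ argument around equations~\eqref{eq:equilibrium}--\eqref{eq:last-ineq}). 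This shows that asymptotically almost surely all colours of a random $D$-coloured digraph lie in a single densest subgraph $H$ of $D$.

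Then I would carry out the decoloring argument of Section~\ref{sect:uncol}, but now summing over the finitely many densest subgraphs $H_1,\dots,H_m$ of $D$ (up to the relevant equivalence). For each $H_i$, the random $D$-coloured digraphs concentrated on $H_i$, after removing colours, concentrate on $\Csp(H_i)$; by the core/automorphism analysis each such class has an $\omega$-categorical almost-sure theory $T_i$ (the theory of $U_{H_i}$, or more precisely of the reduct $C_{H_i}'$ once one accounts for the automorphisms of $H_i$ — here one may need to further split according to the orbit structure, still a finite split). The fraction of all $D$-coloured digraphs (equivalently, up to lower-order corrections, of digraphs in $\Csp(D)$ weighted by number of homomorphisms, equivalently, after Lemma~\ref{lem:one-homo}-type arguments, of unweighted digraphs in $\Csp(D)$) whose colours land in $H_i$ tends to a constant $\lambda_i$, determined by comparing the leading exponential terms $2^{n^2 d}$ and the polynomial prefactors; the $\lambda_i$ sum to $1$. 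Hence $\phi_\omega^{\Csp(D)} = \sum_i \lambda_i \phi_\omega^{\mathcal D_i}$ with $\mathcal D_i$ the class with 0-1 law $T_i$, as in Example~\ref{expl:combine} where $D = \vec C_3 \,\dot\cup\, T_3$ gives $\lambda_1=\lambda_2=1/2$.

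The main obstacle I expect is the passage from the $D$-coloured count to the uncoloured count for general $D$: when several densest subgraphs of the same density coexist and some of them have non-trivial automorphisms or fail to be cores, one cannot simply invoke Lemma~\ref{lem:one-homo}. One must show that the multiplicity of homomorphisms from a random $G\in\Csp(D)$ to $D$ is, asymptotically, an almost-surely constant factor depending only on which densest $H_i$ the image lies in (for instance, $|\Aut(H_i)|$ together with the blow-up structure), so that the uncoloured measure decomposes into the same finite mixture with the same (possibly rescaled) weights $\lambda_i$. Getting these weights to be well-defined constants in $[0,1]$ summing to $1$ — rather than quantities that oscillate with $n$ — is the delicate point, and it is exactly here that the sharper density $d_n$ and careful control of the subexponential prefactors are needed.
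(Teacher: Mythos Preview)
Your proposal is correct and follows essentially the same approach as the paper: the paper's own argument is only a brief sketch stating that one adapts the proof of Theorem~\ref{thm:main} to allow double edges (counted twice in the density, noting that a double edge has density $1/2>(k-1)/(2k)$), and then reruns the decolouring arguments of Section~\ref{sect:uncol} over the finitely many densest subgraphs. Your outline matches this exactly, and in fact articulates the remaining obstacle (controlling homomorphism multiplicity when densest parts are not cores or have nontrivial automorphisms) more explicitly than the paper does.
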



\section{Concluding Remarks and Open Problems}
Many of our results for digraphs (e.g., Theorem~\ref{thm:convergence}) can be generalised 
to general relational structures with essentially the same proof. For the appropriate notion of acyclicity for general structures, see~\cite{NesetrilTardif}.  Theorem~\ref{thm:dual} has been generalized to  general structures~\cite{NesetrilTardif} and therefore, the same approach as we have taken here can be applied to study 
$\Forb({\mathcal F})$ where ${\mathcal F}$ is a finite set of acyclic structures. However, we do not know of any elegant characterisation of the densest substructure for general structures (in the style of Lemma~\ref{lem:blow-up}).
We conclude with other open questions.

 
\begin{question}
Determine the almost-sure theory of $\Forb(\{\vec C_3\})$. Is it complete? Is it $\omega$-categorical? 
\end{question}

\begin{question}
Let $\tau$ be a finite relational signature and let ${\mathcal F}$ be a finite set of finite $\tau$-structures. Does
$\Forb({\mathcal F})$ have first-order convergence? 
\end{question}

\subsection*{Acknowledgements}
The authors thank an anonymous referee for many comments that helped to improve the article; in particular, the referee pointed out an incorrect argument in a central proof of an earlier version of the article. We also want to thank Santiago Guzm\'an Pro for many conversations during the process of fixing this argument. 

\bibliographystyle{abbrv}
\bibliography{global}


\end{document}